\newtheorem{theorem}{Theorem}[section]
\newtheorem{lemma}{Lemma}[section]
\newcommand{\N}{\mathbb{N}}
\newcommand{\R}{\mathbb{R}}
\newcommand{\C}{\mathbb{C}}
\begin{document}

\begin{flushleft}
\Large 
\noindent{\bf \Large Regularized Factorization Method for a perturbed positive compact operator applied to inverse scattering}
\end{flushleft}

\vspace{0.2in}

{\bf  \large Isaac Harris}\\
\indent {\small Department of Mathematics, Purdue University, West Lafayette, IN 47907 }\\
\indent {\small Email: \texttt{harri814@purdue.edu}}\\


\begin{abstract}
\noindent In this paper, we consider a regularization strategy for the factorization method when there is noise added to the data operator.  The factorization method is a {\it qualitative} method used in shape reconstruction problems. These methods are advantageous to use due to the fact that they are computationally simple and require little a priori knowledge of the object one wishes to reconstruct. The main focus of this paper is to prove that the regularization strategy presented here produces stable reconstructions. We will show this is the case analytically and numerically for the inverse shape problem of recovering an isotropic scatterer with a conductive boundary condition. We also provide a strategy for picking the regularization parameter with respect to the noise level.  Numerical examples are given for a scatterer in 2 dimensions. 
\end{abstract}

\noindent {\bf Keywords}:  Factorization Method $\cdot$ Regularization $\cdot$ Shape Reconstruction   \\

\noindent {\bf MSC}:  35J05, 35Q81, 46C07

\section{Introduction}

We are interested, in studying a regularization strategy for the factorization method to prove that it is stable with respect to noise added to the positive compact data operator. This is a {\it qualitative} reconstruction method that can be used to solve many inverse shape problems. The factorization method was first introduced in \cite{firstFM} for reconstructing a sound soft or hard scatterer from the far-field measurements. Over the years the factorization method has become a useful analytical and computational tool for shape reconstruction. See the papers \cite{fm-waveguide,fm-eit-crack,fm-shixu,FM-wave,fm-gbc,GH1,fm-GR,Harris-Rome,FM-EIT,Liem} and the references therein for applications of the factorization method for solving inverse shape problems for elliptic and hyperbolic PDEs.

The main idea behind the factorization method is to connect the unknown region to be reconstructed with the range of your data operator. This is done by considering a linear ill-posed equation that is only solvable if and only if the `sampling point' is in the region of interest. Therefore, one can use Picard's criteria to reconstruct the region. To do so, one constructs an imaging functional that is a series where the sequence is defined by an inner--product in the numerator and the eigenvalues of a compact operator in the denominator. This could cause instabilities in  the reconstruction since the denominator tends to zero rapidly. To stabilize the numerical reconstructions the authors in \cite{GLSM} developed a generalized linear sampling method that uses the ideas from the factorization method to derive a new imaging functional. The analysis provided in \cite{arens,GLSM} connected the factorization method and the linear sampling method \cite{CK} (see \cite{rgf-colton,lsm-em,lsm-heat} for other applications). This idea was further studied in \cite{harris1,Harris-Rome} where a similar imaging functional was derived as in \cite{GLSM} using any suitable regularization scheme.

The imaging functional derived in the papers  \cite{harris1,Harris-Rome} are referred to as the regularized factorization method. Here we show that this method is stable with respect to noise in the data. The work in this paper is mainly influenced by the analysis in \cite{arens,arens2,Gebauer,FM-DoT-theory,RegFM}. These papers all study different imaging functionals from qualitative reconstruction methods to provided accurate and stable methods for shape reconstruction. In order to prove that the regularized factorization method is stable with respect to noise in the measured data, we will use results from perturbation theory to prove our main result.

The rest of the paper is structured as follows. We begin by discussing some results from perturbation theory that will be used in our analysis. First we discuss some known results and then we will provided the necessary extension to the problem under consideration. This will allow us to prove that  the regularized factorization method is stable with respect to noise added to the data. With this, we will then apply the theory to recover an isotropic scatterer with a conductive boundary. To do so, we will factorize the far-field operator and analyze the operators in the factorization to prove that our theory holds. Lastly, we will provide some numerical examples in  2 dimensions for recovering the scatterer. In our numerical experiments, we will derive an analytical method for picking the regularization parameter.

\section{Results from Perturbation Theory}\label{p-theory}
In this section, we will discuss some abstract results related to perturbation theory that will be used to prove the main result of the paper. The results that we will need pertain to the perturbation of a self-adjoint compact operator acting on a Hilbert space. We will review some of the results and analysis in \cite{p-theory-book,RegFM}. We are motivate by the work in \cite{harris1,Harris-Rome} where regularized variants of the factorization method were developed. This method has been applied to diffuse optical tomography \cite{harris1}, electrical impedance tomography \cite{GH1} and inverse scattering \cite{regfm-ams}. In the aforementioned papers, the results hold when one has the unperturbed data operator whereas we wish to extend the results when one only has access to the perturbed data operator. 

\subsection{Theory for positive self-adjoint compact operators}\label{standard-theory}
To begin, we will assume that $K$ and $K^\delta: X \longrightarrow X$ are a pair of positive self-adjoint compact operators acting on a Hilbert space $X$. We will also assume that, $K^\delta$ is a perturbation of the operator $K$ such that $\| K-K^\delta\| \leq \delta$ { for some } $ 0 < \delta \ll 1$
where $\| \cdot \|$ denotes the operator norm. From the Hilbert-Schmidt Theorem, we have that both operators are orthogonally diagonalizable such that 
$$K x = \sum\limits_{j=1}^{\infty} \lambda_{n} (x,x_n)_X x_n \quad \text{ and  } \quad K^\delta x = \sum\limits_{n=1}^{\infty} \lambda^\delta_{n} \big(x,x^\delta_n\big)_X x^\delta_n$$
where $\lambda_n$ and  $\lambda^\delta_n \in \R_{>0}$ are the eigenvalues in non-increasing  order that tend to zero as $n \to \infty$. Here, $x_n$ and $x^\delta_n$ are the corresponding eigenfunctions that form an orthonormal basis of $X$.  

We have the continuity of the spectrum i.e. Hausdorff distance between the spectrums satisfies that 
$$\text{dist} \big(\text{spec}(K), \text{spec}(K^\delta) \big)\leq \| K-K^\delta\| $$ 
where we let $\text{spec}(\cdot )$ denote the set of eigenvalues for a self-adjoint compact operator \cite{p-theory-book}. Now, assume that for a fixed $n \in \N$ we have that 
$$ \text{dist}\big(\lambda_n , \text{spec}(K)\setminus \{\lambda_n\} \big) =\inf \big\{ | \lambda_n - \lambda_m |  \,\,: \,\, \lambda_n \neq \lambda_m   \,\,\,\, \text{with $\lambda_m \in$ spec$(K)$}   \big\}  \geq \rho$$
for some $\rho>0$. Then, we can define  the spectral projection as in \cite{p-theory-book} on the eigenspace corresponding to the eigenvalue $\lambda_n$ and $\lambda^\delta_n$ which are given by the 
\begin{align} \label{projection}
P_{n} = \frac{1}{2\pi \text{i}}\int_{\Gamma_n} (\lambda I-K)^{-1} \, \text{d}\lambda  \quad \text{ and  } \quad P^\delta_{n} =  \frac{1}{2\pi \text{i}} \int_{\Gamma_n} \big(\lambda I-K^\delta\big)^{-1} \, \text{d}\lambda 
\end{align}
where $\Gamma_n = \partial B(\lambda_n ; \rho/2)$ provided that $\rho/2>\delta$. Here, we define the sets for the integrals as 
$$\partial B(\lambda_n ; \rho/2)  = \big\{ \xi \in \C \,\,: \,\, |\xi -\lambda_n| =  \rho/2 \,\,  \,\,  \text{for some given $\rho>0$} \}.$$
Therefore, the integrals are over the contour $\Gamma_n$ in the complex plane. Notice, that since we have assumed that $\rho/2>\delta$ this implies that the intersection of $\Gamma_n$ with either $\text{spec}(K)$  or $\text{spec}(K^\delta)$ is empty. Indeed, since we have assumed that $\text{dist}\big(\lambda_n , \text{spec}(K)\setminus \{\lambda_n\} \big) \geq \rho$ we have that 
$$\text{dist}\big(\Gamma_n , \text{spec}(K) \big) \geq \rho/2$$ 
and by the triangle inequality we can easily obtain that 
$$\text{dist}\big(\Gamma_n , \text{spec}(K^\delta) \big) \geq \rho/2 - \delta \quad \text{which is assumed to be positive.}$$
Therefore, the contour integrals in \eqref{projection} are well defined bounded linear operators by the Fredholm Alternative (see for e.g. \cite{p-theory-book,RegFM}). 

By Theorem 4.2 in \cite{RegFM} we have the following norm estimate
\begin{align} \label{p-estimate1}
\big\| P_{n} - P^\delta_{n}  \big\| \leq \frac{\delta}{\rho/2-\delta} \quad \text{ provided that  } \quad \rho/2>\delta. 
\end{align}
Since, we are interested in the case when $0 < \delta \ll 1$ we will assume that $\delta \in (0,1/4)$ which gives that we can take $\rho/2 = \sqrt{\delta}$. With this, some simple calculations using \eqref{p-estimate1} gives that 
\begin{align} \label{p-estimate2}
\big\| P_{n} - P^\delta_{n}  \big\| \leq 2 \sqrt{\delta} \quad \text{ provided that  } \quad \delta \in (0,1/4). 
\end{align}
Now, by Proposition 4.3 of \cite{RegFM} we have that the projection operators are given by 
$$P_{n} x = \sum_{\lambda = \lambda_n} (x,x_n)_X x_n \quad \text{and} \quad P^\delta_{n} x = \sum_{\lambda = \lambda^\delta_n} \big(x,x^\delta_n\big)_X x^\delta_n$$
i.e. the projection  onto the space spanned by the orthonormal eigenfunctions corresponding to a specific eigenvalue.
Therefore, we have that 
\begin{align} \label{p-estimate3}
\sum_{\lambda = \lambda^\delta_n} |(x,x^\delta_n)_X|^2 - \sum_{\lambda = \lambda_n} \big| \big(x,x_n\big)_X \big|^2 = \| P^\delta_n x\|_X^2 -\| P_n x \|^2_X \leq 4 \| x\|^2 \big\| P^\delta_{n} - P_{n}  \big\| .
\end{align}
The above estimate is obtained by using the definition of the norm on $X$ and the triangle inequality (see \cite{RegFM} for details). With this, we can now extend these result for a positive operator mapping a Hilbert Space into it's dual space. 

\subsection{Extension of standard perturbation results}\label{new-theory}

In this section, we will use the perturbation theory discussed above for positive self-adjoint compact operators to positive compact operators that map $X$ into it's dual space $X^*$. To this end, we assume that $A$ and $A^\delta: X \to X^*$ are acting on the complex Hilbert space $X$ that are positive and compact. As in the previous section, we will assume that $A^\delta$ is a perturbation of $A$ satisfying the inequality 
\begin{align}\label{p-bound1}
\| A-A^\delta\| \leq \delta \quad \text{ for some } \quad 0 < \delta \ll 1.
\end{align}
Here, we will assume that $\langle \cdot \, , \cdot \rangle_{X\times X^*}$ denote the sesquilinear dual-pairing between $X$ and $X^*$.  Furthermore, we shall assume that $H$ is the Hilbert pivoting space such that the dual-pairing coincides with the inner-product on the $H$ with $X \subset H \subset X^*$ (with dense inclusion) forming a Gelfand triple.

In order to use the theory for self-adjoint compact operators, we let $R : X^* \to X$ denote the bijective isometry given by the Riesz Representation Theorem such that 
\begin{align} \label{RRT}
R \ell = x_\ell \quad \text{ where } \quad  (x \, , x_\ell )_X = \langle x \, , \ell \rangle_{X\times X^*}  \quad \text{ for all } \quad  x \in X.
\end{align}
Note, that due to the fact that the dual-pairing is sesquilinear, we have that $R$ is a linear isometry. Therefore, we have that $RA$ and $RA^\delta: X \to X$ and satisfy 
$$\| RA-RA^\delta\| = \| A-A^\delta\| \leq \delta.$$
Notice, that the operator $R A : X \to X$ satisfies 
$$\big(x \, , (RA) x \big)_X = \langle x \, , A x \rangle_{X\times X^*} > 0 \quad \text{ for all } \quad  x \in X \setminus \{0\}$$
since $A$ is assumed to be positive and similarly for $RA^\delta$. By appealing to Corollary 7.3 of \cite{LA-done} (see also Theorem 3:10-3 in \cite{func-analysis}) we have that $RA$ and $RA^\delta$ are positive self-adjoint compact operators acting on the complex Hilbert Space $X$. This implies that, we have the results and estimates from Section \ref{standard-theory} where $K=RA$ and $K^\delta=RA^\delta$. From this, we let 
$$\{\lambda_n ; x_n \} \in \R_{>0}\times X \quad \text{ and } \quad \{\lambda^\delta_n ; x^\delta_n \} \in \R_{>0}\times X$$
denote the eigenvalues and orthonormal functions for $RA$ and $RA^\delta$, respectively. By the continuity of the spectrum we have that 
$$\text{dist}\big(\text{spec}(RA), \text{spec}(RA^\delta) \big) \leq \delta.$$ 
Now, we can define the corresponding orthonormal dual-basis $\ell_n$ and  $\ell^\delta_n \in X^*$ such that 
$$R \ell_n = x_n \quad \text{and } \quad R \ell^\delta_n = x^\delta_n  \quad \text{for all } \,\,\, n \in \N, \quad  \text{ respectively.}$$ 
Note, that  $X^*$ is also a Hilbert space with the inner--product 
$$ (\ell , \varphi )_{X^*} = (x_\ell , x_\varphi )_{X} \quad \text{ for all } \quad \ell, \varphi \in X^*  \quad \textrm{ where }\;  R \ell = x_\ell \; \textrm{ and } \; R \varphi = x_\varphi.$$
From the analysis in \cite{harris1}, we have that 
$$\{\lambda_n ; x_n ; \ell_n \} \in \R_{>0}\times X\times X^*  \quad \text{ and } \quad  \{\lambda^\delta_n ; x^\delta_n ; \ell^\delta_n \} \in \R_{>0}\times X\times X^*$$ 
corresponds to the singular value decomposition for the operators $A$ and $A^\delta$, respectively. 

With this we can now provide the main perturbation result that will be used to study the regularized factorization method in the preceding section. To this end, following in a similar manner as in \cite{RegFM} we need to define 
\begin{align}\label{n-delta}
N(\delta) = \sup \Big\{ n \in \N \,\, : \,\, \text{dist}\big(\lambda_n , \text{spec}(RA)\setminus \{\lambda_n\} \big) \geq 2 \sqrt{\delta} \,\,  \textrm{ and }  \,\, 8n\sqrt[4]{\delta} \leq 1 \Big\}.
\end{align}  
Notice, that as $\delta \to 0^+$ we have that $N(\delta) \to \infty$. Now, we prove a vital result for extending the regularized factorization method for a perturbed positive compact operators mapping the Hilbert space $X$ into the dual space. 

\begin{theorem}\label{p-conv}
Assume that $A$ and $A^\delta: X \to X^*$ are positive and compact satisfy \eqref{p-bound1}. Then for any $n \in \N$ we have that $\lambda^\delta_n \to \lambda_n$ as $\delta \to 0^+$ as well as 
$$\sum_{n=1}^{N(\delta)} \Big[ | \langle x^\delta_n,\ell  \rangle_{X\times X^*}|^2 - |\langle x_n,\ell  \rangle_{X\times X^*}|^2 \Big] \leq \sqrt[4]{\delta} \| \ell \|_{X^*}^2$$
where $N(\delta)$ is defined by \eqref{n-delta} provided that $\delta \in (0,1/4)$ for any $\ell \in X^*$. 
\end{theorem}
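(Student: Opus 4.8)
The plan is to reduce everything to the self-adjoint theory of Section \ref{standard-theory} applied to the operators $K = RA$ and $K^\delta = RA^\delta$, and then to carefully track the powers of $\delta$ dictated by the definition \eqref{n-delta} of $N(\delta)$. For the first claim, I would recall that $K$ and $K^\delta$ are positive, self-adjoint and compact with $\|K-K^\delta\|\leq\delta$ by \eqref{p-bound1} and the isometry property of $R$. Using the Courant--Fischer min-max characterization of the eigenvalues of a compact self-adjoint operator, listed in non-increasing order, one obtains the standard Weyl-type bound $|\lambda_n-\lambda^\delta_n|\leq\|K-K^\delta\|\leq\delta$ for every fixed $n\in\N$; letting $\delta\to 0^+$ gives $\lambda^\delta_n\to\lambda_n$ immediately.

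For the summation estimate, the first step is to convert the dual pairings into $X$-inner products. Setting $x_\ell = R\ell \in X$, the defining relation \eqref{RRT} gives $\langle x_n,\ell\rangle_{X\times X^*} = (x_n,x_\ell)_X$ and $\langle x^\delta_n,\ell\rangle_{X\times X^*} = (x^\delta_n,x_\ell)_X$, while the isometry property yields $\|x_\ell\|_X = \|\ell\|_{X^*}$. Thus each bracketed summand equals $|(x_\ell,x^\delta_n)_X|^2 - |(x_\ell,x_n)_X|^2$ and the whole problem is recast in the pivot Hilbert space $X$. Next I would fix $n\leq N(\delta)$ and exploit the separation built into \eqref{n-delta}: since $\text{dist}(\lambda_n,\text{spec}(RA)\setminus\{\lambda_n\})\geq 2\sqrt\delta$, I may take $\rho = 2\sqrt\delta$, hence $\rho/2 = \sqrt\delta > \delta$, in the spectral-projection construction \eqref{projection}, so that estimate \eqref{p-estimate2} applies and gives $\|P_n - P^\delta_n\|\leq 2\sqrt\delta$. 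Applying \eqref{p-estimate3} with $x = x_\ell$ then produces the per-term bound $|(x_\ell,x^\delta_n)_X|^2 - |(x_\ell,x_n)_X|^2 \leq 4\|x_\ell\|_X^2\,\|P^\delta_n - P_n\| \leq 8\sqrt\delta\,\|x_\ell\|_X^2$.

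Finally I would sum over $n = 1,\dots,N(\delta)$ to obtain the bound $8\,N(\delta)\sqrt\delta\,\|x_\ell\|_X^2$, and then invoke the second constraint in \eqref{n-delta}, namely $8\,N(\delta)\sqrt[4]\delta\leq 1$. This gives $8\,N(\delta)\sqrt\delta \leq \delta^{-1/4}\cdot\delta^{1/2} = \sqrt[4]\delta$, and since $\|x_\ell\|_X = \|\ell\|_{X^*}$ the claimed estimate $\sqrt[4]\delta\,\|\ell\|_{X^*}^2$ follows exactly. The entire argument is thus an accounting of exponents: the per-term loss of $8\sqrt\delta$ is compensated by restricting the sum to at most $\tfrac18\delta^{-1/4}$ indices.

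The step requiring the most care, and the one I would expect to be the main obstacle, is the term-by-term application of \eqref{p-estimate3}. For a simple eigenvalue one has $\|P_n x_\ell\|_X^2 = |(x_\ell,x_n)_X|^2$ and the identification is transparent, but when $\lambda_n$ is repeated the spectral projection $P_n$ acts on the full eigenspace, so the indices $n$ sharing a common eigenvalue must be grouped into a single projection before \eqref{p-estimate3} is applied block-by-block. Making this rigorous requires verifying that $P^\delta_n$ captures precisely the perturbed eigenfunctions matched to that eigenspace, i.e.\ that no perturbed eigenvalue crosses the contour $\Gamma_n$; this is exactly where the separation $\rho = 2\sqrt\delta$ together with $\|K-K^\delta\|\leq\delta$ is needed, since $\delta < \rho/2 = \sqrt\delta$ guarantees $\text{dist}(\Gamma_n,\text{spec}(K^\delta)) \geq \rho/2 - \delta > 0$ and hence the correct multiplicity is preserved inside $\Gamma_n$.
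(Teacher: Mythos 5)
Your proposal is correct and follows essentially the same route as the paper: reduce to the self-adjoint operators $RA$, $RA^\delta$ via the Riesz isometry, apply the projection estimates \eqref{p-estimate2}--\eqref{p-estimate3}, and close with the constraint $8N(\delta)\sqrt[4]{\delta}\leq 1$. The grouping by repeated eigenvalues that you correctly flag as the delicate step is handled in the paper by introducing $M(\delta)$, the number of distinct eigenvalues among $\lambda_1,\dots,\lambda_{N(\delta)}$, and bounding $M(\delta)\leq N(\delta)$, which is exactly the block-by-block accounting you describe.
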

\begin{proof}
To begin the proof, notice that by the continuity of the spectrum for each $n \in \N$ we have the estimate  $|\lambda^\delta_n - \lambda_n | \leq \delta$ proving the convergence of the eigenvalues.

To prove the claimed estimate, we first note that by \eqref{RRT} we have that 
$$ \langle x_n , \ell \rangle_{X\times X^*} = ( x_n  , x_\ell )_X  \quad \text{ and } \quad \langle x^\delta_n ,\ell \rangle_{X\times X^*} = ( x^\delta_n  , x_\ell )_X \quad \text{ for all } \quad n \in \N$$ 
with $x_\ell = R \ell$. Therefore, we have that 
$$ \sum_{n=1}^{N(\delta)} \Big[ | \langle x^\delta_n,\ell  \rangle_{X\times X^*}|^2 - |\langle x_n,\ell  \rangle_{X\times X^*}|^2 \Big] = \sum_{n=1}^{N(\delta)} \Big[ |(x_\ell \, , x^\delta_n )_X |^2 -|(x_\ell \, , x_n )_X |^2 \Big].$$ 
We now, let $M(\delta)$ be the number of distinct eigenvalues from $\lambda_1\geq \cdots \geq \lambda_{N(\delta)}$. With this, we can appeal to \eqref{p-estimate3} to obtain that 
$$\sum_{n=1}^{N(\delta)} \Big[ | \langle x^\delta_n,\ell  \rangle_{X\times X^*}|^2 - |\langle x_n,\ell  \rangle_{X\times X^*}|^2 \Big]  = \sum_{n=1}^{M(\delta)} \| P^\delta_n x_\ell \|_X^2 -\| P_n x_\ell \|^2_X$$
where $P_{n}$ and  $P^\delta_{n}$ are given by \eqref{projection} with the self-adjoint compact operators $RA$ and $RA^\delta$, respectively. 
We can again use \eqref{p-estimate3} to obtain the estimate 
\begin{align*}
\sum_{n=1}^{N(\delta)}\Big[ | \langle x^\delta_n,\ell  \rangle_{X\times X^*}|^2 &- |\langle x_n,\ell  \rangle_{X\times X^*}|^2 \Big] \\
& \leq M(\delta) \max \Big\{ \| P^\delta_n x_\ell \|_X^2 -\| P_n x_\ell \|^2_X  \,\, : \,\, n \leq N(\delta) \Big\} \\
& \leq   N(\delta) \max \Big\{ 4 \| x_\ell \|_X^2 \big\| P_{n} - P^\delta_{n}  \big\|  \,\, : \,\, n \leq N(\delta) \Big\},
\end{align*}
where we have used the fact that $M(\delta) \leq N(\delta)$. Now, by \eqref{p-estimate2} we have that 
$$\max\Big\{ 4 \| x_\ell \|_X^2 \big\| P_{n} - P^\delta_{n}  \big\|  \,\, : \,\, n \leq N(\delta) \Big\} \leq 8 \sqrt{\delta} \| x_\ell \|_X^2 $$
since we have assumed that $\delta \in (0,1/4)$. By the fact that $R$ is an isometry, we have the equality  $\| x_\ell \|_X = \| \ell \|_{X^*}$. Combining the above inequalities gives that 
\begin{align*}
\sum_{n=1}^{N(\delta)}\Big[ | \langle x^\delta_n,\ell  \rangle_{X\times X^*}|^2 - |\langle x_n,\ell  \rangle_{X\times X^*}|^2 \Big]  & \leq 8 N(\delta) \sqrt{\delta}  \| \ell \|^2_{X^*}\\
&\leq  \sqrt[4]{\delta} \| \ell \|^2_{X^*}
\end{align*}
by using the fact that $8N(\delta)\sqrt[4]{\delta} \leq 1$, proving the claim. 
\end{proof} 

Notice, that the a rephrased version of Theorem \ref{p-conv} is still valid for the case when the operators map the $X$ into itself. Using this result we can prove that the regularized factorization method is stable with respect to noisy data. Also, we can remove the assumption that $X$ is a complex Hilbert Space by adding the assumption that the mapping 
$$(x,y) \longmapsto  \langle y \, , A x \rangle_{X\times X^*} $$
is symmetric see \cite{harris1} for details.

\section{Regularized Factorization Method with Error}\label{new-RegFM}
In this section, we will study the regularized factorization method for a perturbed positive operator $A^\delta: X \to X^*$. As in the previous section, we assume that $X$ is an infinite dimensional  Hilbert space and $X^*$ is the corresponding dual space where $X \subseteq H \subseteq X^*$ forming a Gelfand triple with Hilbert pivoting space $H$. In our analysis, we will assume that $A^\delta$ is a perturbation of the operator $A: X \to X^*$ satisfying \eqref{p-bound1}. The operator $A$, is assumed to have the factorization
\begin{align}\label{factorization}
A=S^* T S  \quad \text{ with } \quad S: X \to V  \quad \text{ and } \quad T: V \to V
\end{align}
with $V$ also being a Hilbert space. The adjoint operator for $S$ is the mapping $S^*: V \to X^*$ satisfying the equality 
\begin{align*}
(S x , v )_{V} = \langle x , S^* v \rangle_{X\times X^*} \quad \text{ for all } \quad  v \in {V} \textrm{ and } x \in X.
\end{align*}
Furthermore, We will assume that the operator $S$ is compact and injective where as  the operator $T$ is bounded and strictly coercive on Range$(S)$. Notice that,  from the factorization of the operator $A$ we have that it is also positive and compact.

In \cite{harris1}, it is proven that one can connect the $\text{Range} (S^*)$ to the singular value decomposition of $A$ denoted $\{\lambda_n ; x_n ; \ell_n \} \in \R_{>0}\times X\times X^* $ such that 
\begin{align}\label{regfm-thm1}
\ell \in \text{Range}(S^* ) \quad \text{if and only if} \quad  \sum \frac{1}{\lambda_n}  \left|\langle x_n ,\ell \rangle_{X\times X^*} \right|^2 <\infty
\end{align}
(see \cite{Gebauer} for the case when $A$ maps $X$ into itself). Note, that due to the fact that $\lambda_n \to 0$ (rapidly) as $n \to \infty$ a regularized version of \eqref{regfm-thm1} was proven. This is due to the fact that, in shape reconstruction problems, using  \eqref{regfm-thm1} could result in some numerical instabilities (see for e.g. \cite{regfm-ams}). With the above assumptions, it is shown that 
\begin{align}\label{regfm-thm2}
\ell \in\text{Range} (S^*) \quad \text{ if and only if} \quad \liminf\limits_{\alpha \to 0} \langle x^\alpha \, , A x^\alpha \rangle_{X\times X^*} < \infty
\end{align}
where $x^\alpha$ is the regularized solution to $Ax=\ell$. In order to define the regularized solution $x^\alpha$ we again use the singular value decomposition of $A$ which gives that 
\begin{align*}
x^\alpha =  \sum \frac{\phi_\alpha(\lambda_n)}{\lambda_n} \overline{ \langle x_n,\ell  \rangle}_{X\times X^*} \, x_n
\end{align*}
where we have used that 
$$( \ell_n \, , \ell )_{X^*} = \langle x_n \, , \ell \rangle_{X\times X^*}  \quad \text{ for all } \quad  n \in \N \quad \text{ where  } \quad R \ell_n =x_n.$$
Here, we will assume that for $\alpha>0$ the family of filter functions $\phi_\alpha (t) : \big[0 ,\|A\| \big] \to \R_{\geq0}$ satisfies that for $0 < t \leq  \|A\|$
\begin{align} \label{filter-assumptions}
\lim\limits_{\alpha \to 0} \phi_\alpha(t) =1, \quad \phi_\alpha(t) \leq C_{\text{reg}}  \quad \text{ and } \quad  \phi_\alpha(t) \leq C_{\alpha} t \quad \text{for all} \,\, \alpha>0
\end{align}
where the constant  $C_{\text{reg}}$ is independent of the regularization parameter $\alpha$. The filter functions for Tikhonov regularization and Landweber iteration are given by 
\begin{align} \label{filters}
\phi_\alpha (t) =  \frac{t^2}{t^2+\alpha} \quad \text{and} \quad \phi_\alpha(t) = 1-\left( 1 - \beta t^2\right)^{1/\alpha},
 \end{align}
respectively (see for e.g. \cite{kirschipbook}). For the Landweber iteration we assume that $\alpha=1/m$ for some $m \in \N$ and constant $\beta < 1/  \|A\|^2$. Note, that the assumptions on the filter functions in equation \eqref{filter-assumptions} are standard in regularization theory. 

Next, we will prove a similar result as in \eqref{regfm-thm2} where one uses the perturbed operator $A^\delta$. This is usually the case in applications where the measurements are polluted by random noise.  One last assumption we need is that for all 
\begin{align}\label{filter-cont}
\phi_\alpha (\lambda_n^\delta)  \longrightarrow  \phi_\alpha (\lambda_n)  \quad \text{as} \quad \delta \longrightarrow  0^+ \quad \text{for all} \quad \alpha>0.
\end{align}
Therefore, we will assume that $\phi_\alpha (t)$ is continuous with respect to $0 \leq t \leq  \|A\|$. This is true for the filter functions presented in \eqref{filters}. It is clear, that for both filter functions in \eqref{filters} we have that $C_{\text{reg}}=1$ along with 
$$ C_\alpha = 1/(2\sqrt{\alpha}) \quad \text{and } \quad  C_\alpha = \sqrt{\beta/\alpha}$$
for Tikhonov regularization and Landweber iteration, respectively (see for e.g. Theorem 2.8 of \cite{kirschipbook}). Notice, that the condition $\phi_\alpha(t) \leq C_{\alpha} t$ implies that the mapping $t \mapsto {\phi^2_\alpha(t)}/{t}$ for $t>0$ and ${\phi^2_\alpha(t)}/{t}=0$ at $t=0$ is uniformly continuous on $\big[0 ,\|A\| \big]$.

From this, we note that now using the singular value decomposition for $A^\delta$ denoted by $\{\lambda^\delta_n ; x^\delta_n ; \ell^\delta_n \} \in \R_{>0}\times X\times X^*$ then we have that 
\begin{align}\label{regsolu}
x^{\delta,\alpha} =  \sum \frac{\phi_\alpha(\lambda^\delta_n)}{\lambda^\delta_n} \overline{ \langle x^\delta_n,\ell  \rangle}_{X\times X^*} \, x^\delta_n 
\end{align}
where $x^{\delta,\alpha}$ is the regularized solution to $A^\delta x=\ell$. Notice that, in \eqref{regsolu} we have used the fact that 
$$(\ell \, , \ell^\delta_n )_{X^*} = \langle x_\ell \, , \ell^\delta_n \rangle_{X\times X^*}  \quad \text{ for all } \quad  n \in \N  \quad \text{ where  } \quad R \ell =x_\ell.$$
With the expression for $x^{\delta,\alpha}$ given in \eqref{regsolu}, we are now ready to prove the main result of this section i.e. to extend the result in equation \eqref{regfm-thm2} for perturbed operator $A^\delta$.

\begin{theorem}\label{regfm-thm3}
Let $A^\delta: X \to X^*$ be a positive compact operator that is the perturbation of the operator $A: X \to X^*$ satisfying \eqref{p-bound1}. Assume that $A=S^* T S$ such that $S$ is compact and injective as well as $T$ being strictly coercive on Range$(S)$.  Then we have that
$$\ell \in\text{Range} (S^*) \iff \liminf\limits_{\alpha \to 0^+} \liminf\limits_{\delta \to 0^+} \langle x^{\delta,\alpha} \, , A^\delta x^{\delta,\alpha} \rangle_{X\times X^*} < \infty$$
where $x^{\delta,\alpha}$ is the regularized solution given by \eqref{regsolu} to $A^\delta x=\ell$. 
\end{theorem}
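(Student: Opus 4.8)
The plan is to reduce the whole statement to a single convergence in $\delta$ for fixed $\alpha$, and then to invoke the already-established unperturbed characterization \eqref{regfm-thm2}. The starting point is an explicit formula for the functional. Since the singular system of $A^\delta$ satisfies $A^\delta x_n^\delta = \lambda_n^\delta \ell_n^\delta$ together with $\langle x_m^\delta, \ell_n^\delta\rangle_{X\times X^*} = (x_m^\delta, x_n^\delta)_X = \delta_{mn}$, substituting the series \eqref{regsolu} and using that the filter functions are real-valued gives
$$\langle x^{\delta,\alpha}, A^\delta x^{\delta,\alpha}\rangle_{X\times X^*} = \sum_n \frac{\phi_\alpha^2(\lambda_n^\delta)}{\lambda_n^\delta}\big|\langle x_n^\delta, \ell\rangle_{X\times X^*}\big|^2,$$
and the identical computation for the unperturbed system yields $\langle x^\alpha, Ax^\alpha\rangle_{X\times X^*} = \sum_n \frac{\phi_\alpha^2(\lambda_n)}{\lambda_n}|\langle x_n,\ell\rangle_{X\times X^*}|^2$. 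Hence the theorem reduces to showing that, for each fixed $\alpha>0$,
$$\lim_{\delta\to 0^+}\langle x^{\delta,\alpha}, A^\delta x^{\delta,\alpha}\rangle_{X\times X^*} = \langle x^\alpha, Ax^\alpha\rangle_{X\times X^*};$$
granting this, the inner liminf collapses to the unperturbed value, and the claimed equivalence follows verbatim from \eqref{regfm-thm2}.

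To prove this convergence I would set $f_\alpha(t) = \phi_\alpha^2(t)/t$ for $t>0$ and $f_\alpha(0)=0$, which by \eqref{filter-assumptions} satisfies $0\leq f_\alpha(t)\leq C_\alpha C_{\text{reg}}=:M_\alpha$ and, by the remark following \eqref{filters}, is uniformly continuous on $[0,\|A\|]$. Writing $a_n = |\langle x_n,\ell\rangle_{X\times X^*}|^2$ and $a_n^\delta = |\langle x_n^\delta,\ell\rangle_{X\times X^*}|^2$, Parseval's identity for the two orthonormal bases gives the conserved total mass $\sum_n a_n = \sum_n a_n^\delta = \|x_\ell\|_X^2 = \|\ell\|_{X^*}^2$. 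Fixing $\epsilon>0$, I would first choose $N$ (at a gap between two distinct eigenvalues of $RA$) so large that $M_\alpha\sum_{n>N}a_n < \epsilon$, so that the unperturbed tail is immediately below $\epsilon$. For the low modes $n\leq N$ I would group them by the distinct eigenvalues $\mu_k$ of $RA$ and compare each block against the spectral projection $P_n^\delta$ of \eqref{projection}: since every perturbed eigenvalue clustered near $\mu_k$ lies within $\delta$ of $\mu_k$, uniform continuity of $f_\alpha$ replaces its weight by $f_\alpha(\mu_k)$ up to an error tending to $0$, while $\|P_n^\delta x_\ell\|_X^2 \to \|P_n x_\ell\|_X^2$ by \eqref{p-estimate2}. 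This establishes $\sum_{n\leq N}f_\alpha(\lambda_n^\delta)a_n^\delta \to \sum_{n\leq N}f_\alpha(\lambda_n)a_n$ as $\delta\to 0^+$.

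It remains to control the perturbed tail $\sum_{n>N}f_\alpha(\lambda_n^\delta)a_n^\delta$, and this is the step I expect to be the \emph{main obstacle}, since the estimate of Theorem \ref{p-conv} controls only finitely many modes and does not bound the tail directly. Here I would exploit the conservation of total mass: bounding the tail by $M_\alpha\sum_{n>N}a_n^\delta = M_\alpha\big(\|\ell\|_{X^*}^2 - \sum_{n\leq N}a_n^\delta\big)$ and using the low-mode convergence $\sum_{n\leq N}a_n^\delta\to\sum_{n\leq N}a_n$ just established, one obtains $\limsup_{\delta\to0^+}\sum_{n>N}f_\alpha(\lambda_n^\delta)a_n^\delta \leq M_\alpha\sum_{n>N}a_n < \epsilon$. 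Combining the three pieces shows the difference of the two functionals can be made smaller than $3\epsilon$ for all sufficiently small $\delta$, which yields the fixed-$\alpha$ convergence as a genuine limit.

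Finally, taking $\liminf_{\alpha\to 0^+}$ of both sides and comparing with \eqref{regfm-thm2} gives both implications of the equivalence: if $\ell \in \text{Range}(S^*)$, then $\liminf_{\alpha\to 0^+}\langle x^\alpha, Ax^\alpha\rangle_{X\times X^*}<\infty$ and the double liminf is finite, while finiteness of the double liminf forces $\liminf_{\alpha\to 0^+}\langle x^\alpha, Ax^\alpha\rangle_{X\times X^*}<\infty$ and hence $\ell \in \text{Range}(S^*)$. A minor point to address in passing is that $\lambda_n^\delta$ may slightly exceed $\|A\|$, so $f_\alpha$ should be regarded as uniformly continuous on $[0,\|A\|+\delta]$; this enlargement leaves every bound unchanged.
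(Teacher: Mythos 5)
Your proposal is correct, but it takes a genuinely different route from the paper. The paper works with the $\delta$-dependent truncation level $N(\delta)$ of \eqref{n-delta} and feeds the quantitative bound of Theorem \ref{p-conv} into a three-term splitting, obtaining explicit error terms $C_\alpha^2\lambda_1^\delta\sqrt[4]{\delta}\,\|\ell\|_{X^*}^2$ and $C_\alpha^2\lambda^\delta_{N(\delta)+1}\|\ell\|_{X^*}^2$ and hence upper and lower bounds \eqref{upper}--\eqref{lower} that sandwich $\liminf_{\delta\to 0^+}$ between copies of the unperturbed quantity. You instead fix the truncation level $N$ from $\epsilon$ alone, handle the low modes by grouping into spectral-projection blocks (for which the soft statement $\|P_k-P_k^\delta\|\to 0$ from \eqref{p-estimate1} with a fixed gap $\rho$ suffices, so Theorem \ref{p-conv} and the $N(\delta)$ machinery are never needed), and -- this is the genuinely new ingredient -- control the perturbed tail by conservation of mass, $\sum_{n>N}a_n^\delta=\|\ell\|_{X^*}^2-\sum_{n\le N}a_n^\delta$, rather than by forcing $\lambda^\delta_{N(\delta)+1}\to 0$ as the paper does. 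Your block argument is sound: for $\delta$ smaller than half the minimal gap among the first $N$ distinct eigenvalues, Weyl's inequality $|\lambda_n^\delta-\lambda_n|\le\delta$ guarantees that the perturbed eigenvalues inside $\Gamma_k$ are exactly those indexed by the unperturbed block, so $\sum_{\lambda_n=\mu_k}a_n^\delta=\|P_k^\delta x_\ell\|_X^2$, and uniform continuity of $t\mapsto\phi_\alpha^2(t)/t$ (noted after \eqref{filter-cont}) handles the weights. The trade-off: your argument yields the stronger conclusion that $\lim_{\delta\to 0^+}\langle x^{\delta,\alpha},A^\delta x^{\delta,\alpha}\rangle_{X\times X^*}$ exists and equals $\langle x^\alpha,Ax^\alpha\rangle_{X\times X^*}$, and it is more elementary; but it is purely qualitative, whereas the paper's $\sqrt[4]{\delta}$ rate in \eqref{reg-ineq} is precisely what drives the a priori parameter choice $\alpha(\delta)$ in \eqref{reg-param} via the requirement $C^2_{\alpha(\delta)}\sqrt[4]{\delta}\to 0$, so the quantitative route is not dispensable for the paper's later purposes.
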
 
\begin{proof}
Notice, that do to the fact that ${\displaystyle \{\lambda^\delta_n ; x^\delta_n ; \ell^\delta_n \} \in \R_{>0}\times X\times X^* }$ is the singular value decomposition for the compact operator $A^\delta$ we have that $A^\delta x^\delta_n = \lambda^\delta_n \ell^\delta_n$ for any $n \in \N$. Therefore, we see that 
$$ A^\delta x^{\delta,\alpha} =  \sum {\phi_\alpha(\lambda^\delta_n)} \overline{ \langle x^\delta_n,\ell  \rangle}_{X\times X^*} \, \ell^\delta_n$$
by appealing to \eqref{regsolu}. From the fact that, $\ell^\delta_n$ is the dual basis for $x^\delta_n$ with respect to the dual-paring, 
we obtain the equality 
$$\langle x^{\delta,\alpha} \, , A^\delta x^{\delta,\alpha} \rangle_{X\times X^*} = \sum \frac{\phi^2_\alpha(\lambda^\delta_n)}{\lambda^\delta_n} | \langle x^\delta_n,\ell  \rangle_{X\times X^*}|^2.$$
In a similar manner we have that 
$$\langle x^\alpha \, , A x^\alpha \rangle_{X\times X^*} = \sum \frac{\phi^2_\alpha(\lambda_n)}{\lambda_n} | \langle x_n,\ell  \rangle_{X\times X^*}|^2.$$
In order to prove the claim, we bound (above and below) the quantity 
$$\liminf\limits_{\alpha \to 0^+} \liminf\limits_{\delta \to 0^+} \langle x^{\delta,\alpha} \, , A^\delta x^{\delta,\alpha} \rangle_{X\times X^*} $$
by the quantity $\liminf\limits_{\alpha \to 0^+} \langle x^\alpha \, , A x^\alpha \rangle_{X\times X^*}$ and apply the result in equation \eqref{regfm-thm2}. 

To this end, we will now prove the aforementioned upper bound. Therefore, we assume that $N(\delta)$ is defined by \eqref{n-delta} then we have that 
\begin{align}\label{thm-refequ1}  
\langle x^{\delta,\alpha} \, , A^\delta x^{\delta,\alpha} \rangle_{X\times X^*} &= \sum \frac{\phi^2_\alpha(\lambda^\delta_n)}{\lambda^\delta_n} | \langle x^\delta_n,\ell  \rangle_{X\times X^*}|^2  \\ \nonumber
&\hspace{-0.5in}= \sum\limits_{n=1}^{N(\delta)} \frac{\phi^2_\alpha(\lambda^\delta_n)}{\lambda^\delta_n} | \langle x_n,\ell  \rangle_{X\times X^*}|^2 + \sum\limits_{n=1}^{N(\delta)} \frac{\phi^2_\alpha(\lambda^\delta_n)}{\lambda^\delta_n}  \Big[ | \langle x^\delta_n,\ell  \rangle_{X\times X^*}|^2 - | \langle x_n,\ell  \rangle_{X\times X^*}|^2 \Big]  \\ &\hspace{1in}+ \sum\limits_{n=N(\delta)+1}^\infty \frac{\phi^2_\alpha(\lambda^\delta_n)}{\lambda^\delta_n} | \langle x^\delta_n,\ell  \rangle_{X\times X^*}|^2. \nonumber
\end{align}
Notice that, since  $A$ has dense range in $X^*$, this implies that $A$ has infinitely many distinct eigenvalues (since $X$ is infinite dimensional) and therefore  $N(\delta)$ tends to infinity as $\delta \to 0^+$. 

To prove the required upper bound, we first consider the middle term in the second line of equation \eqref{thm-refequ1}
\begin{align*}
\sum\limits_{n=1}^{N(\delta)} \frac{\phi^2_\alpha(\lambda^\delta_n)}{\lambda^\delta_n}  \Big[ | \langle x^\delta_n,\ell  \rangle_{X\times X^*}|^2  &- |\langle x_n,\ell  \rangle_{X\times X^*}|^2 \Big] \\
&\leq C^2_\alpha \sum\limits_{n=1}^{N(\delta)} \lambda^\delta_n  \Big[ | \langle x^\delta_n,\ell  \rangle_{X\times X^*}|^2 - |\langle x_n,\ell  \rangle_{X\times X^*}|^2 \Big] \\
&\leq C^2_\alpha \lambda^\delta_1 \sum\limits_{n=1}^{N(\delta)} \Big[ | \langle x^\delta_n,\ell  \rangle_{X\times X^*}|^2 - |\langle x_n,\ell  \rangle_{X\times X^*}|^2 \Big]. 
\end{align*}
Notice that,  we have used the assumptions on the filter functions in \eqref{filter-assumptions} in the first inequality and the fact that the singular values $\lambda^\delta_n$ are assumed to be in non-increasing order. By appealing to Theorem \ref{p-conv} we have that 
$$\sum\limits_{n=1}^{N(\delta)} \frac{\phi^2_\alpha(\lambda^\delta_n)}{\lambda^\delta_n}  \Big[ | \langle x^\delta_n,\ell  \rangle_{X\times X^*}|^2  - |\langle x_n,\ell  \rangle_{X\times X^*}|^2 \Big]  \leq  C^2_\alpha  \, \lambda^\delta_1 \,  \sqrt[4]{\delta}  \, \| \ell \|^2_{X^*}.$$ 
Now, we will estimate the last term in \eqref{thm-refequ1} such that 
\begin{align*}
\sum\limits_{n=N(\delta)+1}^\infty \frac{\phi^2_\alpha(\lambda^\delta_n)}{\lambda^\delta_n} | \langle x^\delta_n,\ell  \rangle_{X\times X^*}|^2 & \leq C^2_\alpha \sum\limits_{n=N(\delta)+1}^\infty {\lambda^\delta_n}  | \langle x^\delta_n,\ell  \rangle_{X\times X^*}|^2\\
&\leq C^2_\alpha  {\lambda^\delta_{N(\delta)+1} } \sum\limits_{n=N(\delta)+1}^\infty  | \langle x^\delta_n,\ell  \rangle_{X\times X^*}|^2 \\
& \leq C^2_\alpha  {\lambda^\delta_{N(\delta)+1} } \| \ell \|_{X^*}^2.
\end{align*}
Here, we have used the fact that $( \ell^\delta_n \, , \ell )_{X^*} = \langle x^\delta_n \, , \ell \rangle_{X\times X^*}$ for all  $n \in \N$
and the fact  that $\ell^\delta_n$ is an orthonormal sequence in  $X^*$. Combining these two estimates with \eqref{thm-refequ1}, we have that 
$$\langle x^{\delta,\alpha} \, , A^\delta x^{\delta,\alpha} \rangle_{X\times X^*} \leq   \sum\limits_{n=1}^{N(\delta)} \frac{\phi^2_\alpha(\lambda^\delta_n)}{\lambda^\delta_n} | \langle x_n,\ell  \rangle_{X\times X^*}|^2  + C^2_\alpha  {\lambda^\delta_{N(\delta)+1} } \| \ell \|_{X^*}^2 +  C^2_\alpha \lambda^\delta_1 \sqrt[4]{\delta} \| \ell \|^2_{X^*}.$$
We see that by taking the  $\liminf\limits_{\delta \to 0^+}$ of the above inequality, we have the estimate 
$$\liminf\limits_{\delta \to 0^+} \langle x^{\delta,\alpha} \, , A^\delta x^{\delta,\alpha} \rangle_{X\times X^*} \leq   \sum \frac{\phi^2_\alpha(\lambda_n)}{\lambda_n} | \langle x_n,\ell  \rangle_{X\times X^*}|^2. $$
This is obtained by showing the the first term converges to the desired estimate by standard arguments where as the other terms tend to zero. With this, we have the upper bound 
\begin{align}\label{upper}
\liminf\limits_{\alpha \to 0^+} \liminf\limits_{\delta \to 0^+} \langle x^{\delta,\alpha} \, , A^\delta x^{\delta,\alpha} \rangle_{X\times X^*} \leq \liminf\limits_{\alpha \to 0^+} \langle x^\alpha \, , A x^\alpha \rangle_{X\times X^*}.
\end{align}

Now, we prove a similar lower bound to complete the proof. Therefore, we again need to estimate

\begin{align*}
\langle x^{\delta,\alpha} \, , A^\delta x^{\delta,\alpha} \rangle_{X\times X^*} &= \sum \frac{\phi^2_\alpha(\lambda^\delta_n)}{\lambda^\delta_n} | \langle x^\delta_n,\ell  \rangle_{X\times X^*}|^2  \\ 
&\geq \sum\limits_{n=1}^{N(\delta)}  \frac{\phi^2_\alpha(\lambda^\delta_n)}{\lambda^\delta_n} | \langle x^\delta_n,\ell  \rangle_{X\times X^*}|^2\\ 
&\hspace{-0.4in}= \sum\limits_{n=1}^{N(\delta)} \frac{\phi^2_\alpha(\lambda^\delta_n)}{\lambda^\delta_n} | \langle x_n,\ell  \rangle_{X\times X^*}|^2 + \sum\limits_{n=1}^{N(\delta)} \frac{\phi^2_\alpha(\lambda^\delta_n)}{\lambda^\delta_n}  \Big[ | \langle x^\delta_n,\ell  \rangle_{X\times X^*}|^2 - | \langle x_n,\ell  \rangle_{X\times X^*}|^2 \Big] 
\end{align*}
From the previous estimates, we have that  
\begin{align}\label{reg-ineq}
\langle x^{\delta,\alpha} \, , A^\delta x^{\delta,\alpha} \rangle_{X\times X^*} \geq  \sum\limits_{n=1}^{N(\delta)} \frac{\phi^2_\alpha(\lambda^\delta_n)}{\lambda^\delta_n} | \langle x_n,\ell  \rangle_{X\times X^*}|^2 - C^2_\alpha \lambda^\delta_1 \sqrt[4]{\delta} \| \ell \|^2_{X^*}. 
\end{align}
Again, we take the  $\liminf\limits_{\delta \to 0^+}$ of the above inequality \eqref{reg-ineq} to obtain that 
\begin{align*}
\liminf\limits_{\delta \to 0^+}\langle x^{\delta,\alpha} \, , A^\delta x^{\delta,\alpha} \rangle_{X\times X^*} \geq  \sum \frac{\phi^2_\alpha(\lambda_n)}{\lambda_n} | \langle x_n,\ell  \rangle_{X\times X^*}|^2  
\end{align*}
 where we have again used that $N(\delta) \to \infty$ as $\delta \to 0^+$ as well as the continuity of the spectrum. Therefore, just as in proving the upper bound we take  $\liminf\limits_{\alpha \to 0^+}$ to obtain
\begin{align}
\liminf\limits_{\alpha \to 0^+}\liminf\limits_{\delta \to 0^+}\langle x^{\delta,\alpha} \, , A^\delta x^{\delta,\alpha} \rangle_{X\times X^*} \geq \liminf\limits_{\alpha \to 0^+}\langle x^\alpha \, , Ax^\alpha \rangle_{X\times X^*}. \label{lower}
\end{align}
Combining the estimates in equations \eqref{upper} and \eqref{lower}, we have that 
$$\liminf\limits_{\alpha \to 0^+}\langle x^\alpha \, , Ax^\alpha \rangle_{X\times X^*} < \infty \iff   \liminf\limits_{\alpha \to 0^+}\liminf\limits_{\delta \to 0^+}\langle x^{\delta,\alpha} \, , A^\delta x^{\delta,\alpha} \rangle_{X\times X^*} < \infty$$
and the result follows directly from equation \eqref{regfm-thm2}, proving the claim.
\end{proof}

We see that the equation \eqref{regfm-thm2} and the newly obtained result in Theorem \ref{regfm-thm3} are similar to the results found in \cite{GLSM} (see also \cite{GLSM2}). In \cite{GLSM}, the authors developed the Generalized Linear Sampling Method (GLSM). In short, the GLSM considers minimizing the functional 
$$\mathcal{J}_{\alpha} \big(x ; \ell \big)  = \alpha   \langle x \, , Ax \rangle_{X\times X^*}  +\| Ax - \ell  \|^2_{X^*}$$
where $A$ has the factorization \eqref{factorization} (under less restrictions on $T$). For this case, it can be shown that the minimizer of the functional is given by 
$$x^\alpha=\sum \frac{\lambda_n}{\alpha \lambda_n +\lambda_n^2}  \overline{ \langle x_n ,\ell \rangle}_{X\times X^*} x_n.$$
Notice, this imply that the filter function corresponding to the GLSM is given by 
\begin{align}\label{GLSMfilter}
\phi_\alpha (t) =  \frac{t}{\alpha    + t}
\end{align}
and notice that for all $t>0$
$$\lim\limits_{\alpha \to 0} \phi_\alpha(t) =1, \quad   \phi_\alpha(t) \leq 1 \quad \text{ and } \quad \phi_\alpha(t) \leq C_\alpha t \quad \text{for all} \quad \alpha>0$$
where $C_\alpha =1/\alpha$. 
Therefore, we can see that the GLSM for the perturbed operator $A^\delta$ fits with in the theory presented here. From this, provided that $x^{\delta,\alpha}$ is the minimizer of 
$$\mathcal{J}^\delta_{\alpha} \big(x ; \ell \big)  = \alpha   \langle x \, , A^\delta x \rangle_{X\times X^*}  +\| A^\delta x - \ell  \|^2_{X^*}$$
where $A$ and $A^\delta$ satisfy the assumptions of Theorem \ref{regfm-thm3} we can conclude that 
$$\ell \in\text{Range} (S^*) \iff \liminf\limits_{\alpha \to 0^+} \liminf\limits_{\delta \to 0^+} \langle x^{\delta,\alpha} \, , A^\delta x^{\delta,\alpha} \rangle_{X\times X^*} < \infty.$$
This gives another family of filter functions to use in numerical reconstructions. Also, this simplifies the results in \cite{GLSM} that pertain to the case of a perturbed positive data operator. For more applications of the GLSM, we refer to \cite{glsm-cracks,glsm-stokes,glsm-elastic} for a few examples.

\section{Application to an Inverse Shape Problem in Scattering}\label{recon}
In this section, we will apply the theory developed in Section \ref{new-RegFM} to a problem coming from inverse scattering. Here, we will consider the problem of recovering an isotropic scatterer with a conductive coating from the measured far-field data. The factorization method was initially studied for this problem in \cite{fmconductbc} and another factorization was recently studied in \cite{rafa-fm}. Using the newly derived factorization of the far-field operator derived in \cite{rafa-fm} we will show that Theorem \ref{regfm-thm3} can be applied to this inverse shape problem. We note that, when the given perturbed data operator maps a Hilbert space to itself then the dual-pairing in Theorem \ref{regfm-thm3} is replaced with the inner-product on the Hilbert space.

We let $u$ denote the total field given by $u=u^s+u^i$. Here, the incident plane wave is denoted by  $u^i = \text{e}^{\text{i}kx\cdot \hat{y}}$ with wave number $k>0$ and  incident direction $\hat{y} \in \mathbb{S}^{d-1}$ (i.e. the unit circle/sphere) is used to illuminate the scatterer $D$. Throughout this section, the scatterer $D\subset \mathbb{R}^d$ (with $d=2$ or 3) is a simply connected open set with $C^2$ boundary $\partial D$ with unit outward normal vector $\nu$. When the  incident plane wave  interacts with the scatterer it produces the scattered field $u^s \in H^1_{loc}(\R^d)$ that solves the boundary value problem 
\begin{align} \label{direct1}
\begin{array}{lc}
\Delta u^s+k^2 n(x)u^s= - k^2 \big(n(x) -1 \big) u^i \quad & \text{in} \hspace{.2cm}  \mathbb{R}^d \backslash \partial D\\[1.5ex]
 [\![u^s]\!] =0  \quad \text{ and } \quad  [\![\partial_\nu u^s]\!]  = - \eta(x) \big(u^s+u^i\big)  \quad & \text{on} \hspace{.2cm} \partial D.
\end{array}
\end{align}
Here, the normal derivative is given by  $\partial_\nu \phi = \nu \cdot \nabla \phi$ for any $\phi$. Also, we have that  
 $$[\![ \phi ]\!]  := ( \phi^{+} - \phi^{-})  \quad \text{ and } \quad  [\![\partial_\nu \phi ]\!] := ( \partial_{\nu} \phi^{+} - \partial_{\nu} \phi^{-})$$
with `$-$' and `$+$' corresponds to taking the trace on $\partial D$ from the interior or exterior of $D$, {respectively}. Lastly, to close the system, we impose the Sommerfeld radiation condition on the scattered field $u^s$ given by 
\begin{align}\label{src}
{\partial_r u^s} - \text{i} ku^s =\mathcal{O} \left( \frac{1}{ r^{(d+1)/2} }\right) \quad \text{ as } \quad r=|x| \rightarrow \infty
\end{align}
which holds uniformly with respect to the angular variable $\hat{x}=x/r$. 

We will assume that the refractive index $n \in L^{\infty}(\R^d)$ and conductivity $\eta \in L^{\infty}(\partial D)$. In \cite{fmconductbc}, it has been proven that \eqref{direct1}--\eqref{src} is well-posed provided that 
$$ \Im (n) \geq 0 \,\,  \textrm{  a.e. in $D$}\quad \text{ and } \quad \Im (\eta) \geq 0 \,\,  \textrm{  a.e. on $\partial D$}$$
where supp$(n-1)=D$. 
 Therefore, it is well known that for any incident direction $\hat{y}$ the scattered field $u^s$ has the asymptotic behavior (see for e.g. \cite{TE-book})
$$u^s(x,\hat{y})=\gamma\frac{\text{e}^{\text{i}k|x|}}{|x|^{(d-1)/2}}\left\{u^{\infty}(\hat{x},\hat{y})+\mathcal{O}\left(\frac{1}{|x|}\right)\right\}\hspace{.3cm}\text{as}\hspace{.3cm}|x|\longrightarrow \infty$$
 where the constant $\gamma$ is defined by 
$$ \gamma=\frac{\text{e}^{\text{i}\pi/4}}{\sqrt{8\pi k}}\hspace{.3cm} \text{for} \hspace{.3cm} d=2 \hspace{.3cm}\text{and}\hspace{.3cm}\gamma=\frac{1}{4\pi}\hspace{.3cm}\text{for}\hspace{.3cm} d=3.$$
Here, the far-field pattern $u^{\infty}$ depends on both the observation direction $\hat{x}$ and incident direction $\hat{y}$.  Given the measured far-field pattern we can define the associated far-field operator denoted $F$ which is given by 
\begin{align}\label{ff-operator}
(Fg)(\hat{x})=\int_{\mathbb{S}^{d-1}}u^{\infty}(\hat{x},\hat{y})g(\hat{y})\, \text{d}s(\hat{y}) \quad \text{ for } \quad g\in L^2(\mathbb{S}^{d-1})
\end{align}
mapping $L^2(\mathbb{S}^{d-1})$ into itself.

Now, in order to apply Theorem \ref{regfm-thm3} we need to factorize the far-field operator $F$. To this end, we have that in \cite{rafa-fm} the integral identity 
\begin{align}\label{intrep}
u^s(x) = k^2 \int_D (n(\omega)-1)\Phi(x,\omega)\big(u^s(\omega) &+u^i(\omega) \big) \, \text{d}\omega  \nonumber  \\
&+  \int_{\partial D} \eta(\omega) \Phi(x,\omega)\big(u^s(\omega)+u^i(\omega) \big) \, \text{d}s(\omega)
\end{align}
was proven. In equation \eqref{intrep}, we let $\Phi(x , y)$ denote the radiating fundamental solution for  Helmholtz equation given by 
\begin{align}\label{fund-solu}
\Phi(x , y) = \left\{ 
		\begin{array}{cl}
			\frac{\text{i}}{4} H^{(1)}_{0}(k|x - y|) &\quad \text{for} \quad d = 2, \\[1.5ex]
			\displaystyle \quad\quad  \frac{\text{e}^{\text{i}k|x - y|}}{4\pi|x - y|} &\quad \text{for} \quad d = 3
		\end{array}
	\right.
\end{align}
for $x \neq y$, where $H^{(1)}_{0}$ is the first kind Hankel function of order zero. Notice, that equation \eqref{intrep} corresponds to the Lippman-Schwinger integral equation corresponding to the scattering problem \eqref{direct1}--\eqref{src}.  Using equation \eqref{intrep}, we have the factorization 
$$F= H^* T H$$ 
with 
$$ H: L^2(\mathbb{S}^{d-1}) \longrightarrow L^2(D)\times L^2(\partial D) \quad \text{ given by } \quad Hg = \big(v_g \big|_{D} \, , \, v_g \big|_{\partial D} \big)$$
and it's adjoint $ H^*:L^2(D)\times L^2(\partial D)  \longrightarrow  L^2(\mathbb{S}^{d-1})$ is given by 
$$H^*(\varphi  \, , \, \psi)  = \int_D\text{e}^{-\text{i} k x \cdot \hat{y}}  \varphi(x)\, \text{d}x +  \int_{\partial D}  \text{e}^{-\text{i} k x \cdot \hat{y}} \psi(x)\, \text{d}s(x).$$
Here, $v_g$ is the Herglotz wave operator given by 
$$v_g(x) = \int_{\mathbb{S}^{d-1}} \text{e}^{\text{i} k x \cdot \hat{y}}g(\hat{y})  \, \text{d}s(\hat{y}).$$
The middle operator $T: L^2(D)\times L^2(\partial D) \longrightarrow L^2(D)\times L^2(\partial D)$ is defined by 
$$T(f , h)= \big( k^2(n-1)(f +w)\big|_{D} \,  , \,  \eta (h +w)\big|_{\partial D} \big)$$
for any $(f , h) \in  L^2(D)\times L^2(\partial D)$ where $w \in H^1_{loc}(\R^d)$ is the unique solution to  
\begin{align}\label{w-equ}
\begin{array}{lc}
\Delta w+k^2 n(x)w= - k^2 \big(n(x) -1 \big) f \quad \text{in} \quad \mathbb{R}^d \backslash \partial D\\[1.5ex]
[\![w]\!] =0  \quad \text{ and } \quad  [\![\partial_\nu w]\!]  = - \eta(x) \big(w+h\big)   \quad \text{on} \quad \partial D
\end{array}
\end{align} 
along with the radiation condition \eqref{src}. 

In order to apply Theorem \ref{regfm-thm3}, we need to study the analytical properties of the operator $H$ and $T$. Therefore, notice that $H$ is given by integral operators acting of $L^2(D)$ and $L^2(\partial D)$ with analytic kernels which implies the compactness of $H$. Now, for the injectivity of $H$ we assume that $g\in$ Null$(H)$ which implies that 
$$v_g \big|_{D}=0 \quad \text{ and } \quad  v_g \big|_{\partial D}=0.$$ 
Using that fact that $\Delta v_g +k^2 v_g = 0$ in $\R^d$ we have that unique continuation implies $v_g =0$ in  $\R^d$. Which implies that $g=0$ since the Herglotz wave operator is injective (see for e.g. \cite{coltonkress}). Next, we consider the strict coercivity of the operator $T$. In general, $T$ will not be strictly coercive on the range of $H$. In order to circumvent this issue, we will use the augmented far-field operator 
$$F_{\sharp} = \big|  \Re(F) \big| + \big| \Im (F) \big|$$
where 
$$\Re(F) =\frac{1}{2} (F+F^*) \quad  \text{and} \quad \Im (F) = \frac{1}{2 \text{i}} (F-F^*).$$
Note, that $\Re(F)$ and  $\Im (F) $ are self-adjoint compact operators by definition which implies that the absolute value can be compute via the spectral decomposition i.e. the Hilbert-Schmidt Theorem. Now, we recall the following result from \cite{rafa-fm} pertaining to the analytical properties of the operator $T$.

\begin{lemma}\label{t-thm}
Let $T: L^2(D)\times L^2(\partial D) \longrightarrow L^2(D)\times L^2(\partial D)$ be given by 
$$T(f , h)= \big( k^2(n-1)(f +w)\big|_{D} \,  , \,  \eta (h +w)\big|_{\partial D} \big)$$
where $w \in H^1_{loc}(\R^d)$ is the radiating solution to  \eqref{w-equ}. Then provided that $\Re(n-1)$ and $\Re(\eta)$
are both uniformly positive (or negative) definite we have that: 
\begin{enumerate}
\item $\Re(T)$ is the sum of a coercive operator and compact operator.
\item $\Im(T)$ is positive on the $\overline{\text{Range}(H)}$ when $k$ is not a transmission eigenvalue. 
\end{enumerate}
\end{lemma}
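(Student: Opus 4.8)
The statement is the standard middle-operator analysis for the factorization method, and I would establish the two items separately, writing $\psi=(f,h)$, $V=L^2(D)\times L^2(\partial D)$, and $w$ for the radiating solution of \eqref{w-equ} with data $\psi$.

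For item 1 the plan is to split $T=T_0+T_c$, where $T_0\psi=\big(k^2(n-1)f\big|_{D},\,\eta h\big|_{\partial D}\big)$ is pure multiplication and $T_c\psi=\big(k^2(n-1)w\big|_{D},\,\eta w\big|_{\partial D}\big)$ is the solution-operator part. The self-adjoint piece $\Re(T_0)$ is multiplication by $k^2\Re(n-1)$ on $L^2(D)$ and by $\Re(\eta)$ on $L^2(\partial D)$, so the hypothesis that $\Re(n-1)$ and $\Re(\eta)$ are uniformly positive (respectively negative) definite yields $\big(\Re(T_0)\psi,\psi\big)\geq c\big(\|f\|^2_{L^2(D)}+\|h\|^2_{L^2(\partial D)}\big)$ (respectively $\leq -c(\cdots)$), i.e. $\Re(T_0)$ is coercive. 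For $T_c$ I would invoke well-posedness of \eqref{w-equ}, which makes $\psi\mapsto w$ bounded from $V$ into $H^1(D)$ and its Dirichlet trace bounded into $H^{1/2}(\partial D)$; composing with the compact embeddings $H^1(D)\hookrightarrow L^2(D)$ and $H^{1/2}(\partial D)\hookrightarrow L^2(\partial D)$ and multiplying by the $L^\infty$ coefficients shows $T_c$, hence $\Re(T_c)$, is compact. This gives $\Re(T)=\Re(T_0)+\Re(T_c)$ with exactly the claimed structure.

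For item 2 the heart is an energy identity, using $\big(\Im(T)\psi,\psi\big)=\Im\big(T\psi,\psi\big)$. I would expand $\big(T\psi,\psi\big)$ and eliminate the cross terms $k^2\int_D(n-1)w\overline{f}$ and $\int_{\partial D}\eta w\overline{h}$ by applying Green's first identity to $w$ on a large ball $B_R$, inserting the conductive transmission conditions $[\![ w ]\!]=0$ and $[\![ \partial_\nu w ]\!]=-\eta(w+h)$ on $\partial D$, and letting $R\to\infty$ so that \eqref{src} turns the $\partial B_R$ term into the radiated energy $k\gamma^2\|w^\infty\|^2_{L^2(\mathbb{S}^{d-1})}$. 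Taking imaginary parts annihilates the real gradient energy $\int|\nabla w|^2$ and its divergent tail, and after completing squares the algebra should collapse to
\[
\Im\big(T\psi,\psi\big)=k^2\int_D \Im(n)\,|f+w|^2\,\text{d}x+\int_{\partial D}\Im(\eta)\,|w+h|^2\,\text{d}s+k\gamma^2\|w^\infty\|^2_{L^2(\mathbb{S}^{d-1})}.
\]
Since $\Im(n)\geq 0$ in $D$ and $\Im(\eta)\geq 0$ on $\partial D$, all three terms are non-negative, so $\Im(T)\geq 0$ on all of $V$. Strict positivity on $\overline{\text{Range}(H)}$ then follows from Rellich's lemma feeding the interior transmission problem: if $\big(\Im(T)\psi,\psi\big)=0$ then $w^\infty=0$, hence $w\equiv 0$ in $\mathbb{R}^d\setminus\overline{D}$ by Rellich and unique continuation, and the transmission conditions force $w=0$ and $\partial_\nu w^-=\eta h$ on $\partial D$. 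For $\psi=Hg$ one has $f=v_g|_D$ solving $\Delta f+k^2 f=0$ in $D$ with $h=f|_{\partial D}$, so $u=w+f$ satisfies $\Delta u+k^2 n u=0$ in $D$, $u-f=0$ and $\partial_\nu u-\partial_\nu f=\eta u$ on $\partial D$ — precisely the conductive interior transmission problem — whence $f\equiv 0$ when $k$ is not a transmission eigenvalue, so $v_g\equiv 0$, $g=0$ by injectivity of the Herglotz operator, and $\psi=0$.

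The main obstacle will be the energy identity itself: tracking every sign through the repeated integration by parts across $\partial D$ with the conductive jump conditions, and correctly extracting the $\|w^\infty\|^2$ term from the $R\to\infty$ limit, is where essentially all of the genuine work lies. A secondary delicate point is upgrading the definiteness from $\text{Range}(H)$ to its closure: the interior-transmission reduction relies on the Helmholtz structure $h=f|_{\partial D}$, and I expect to need either a density argument or an a priori characterization of $\overline{\text{Range}(H)}$ to guarantee this relation survives the $L^2$ limit.
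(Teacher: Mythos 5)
The paper does not prove this lemma: it is quoted verbatim from the reference \cite{rafa-fm}, so there is no in-paper argument to compare yours against. Judged on its own, your proposal is the standard middle-operator analysis and its structure is sound. For item 1 the splitting $T=T_0+T_c$ into multiplication plus the solution-operator part, with coercivity of $\Re(T_0)$ from the sign condition and compactness of $T_c$ from well-posedness of \eqref{w-equ} composed with $H^1(D)\hookrightarrow L^2(D)$ and $H^{1/2}(\partial D)\hookrightarrow L^2(\partial D)$, is exactly right. For item 2 your energy identity checks out: using $\Delta w+k^2w=-k^2(n-1)(f+w)$ in $D$, $\eta(h+w)=-[\![\partial_\nu w]\!]$, and Green's identity on $B_R$, the imaginary part collapses to $k^2\int_D\Im(n)|f+w|^2+\int_{\partial D}\Im(\eta)|h+w|^2+k|\gamma|^2\|w^\infty\|^2_{L^2(\mathbb{S}^{d-1})}$ (note $|\gamma|^2$, not $\gamma^2$, since $\gamma$ is complex). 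The subsequent chain --- Rellich, $w^-=0$ and $\partial_\nu w^-=\eta h$ from the jump conditions, reduction to the conductive interior transmission problem for $u=w+f$, and injectivity of the Herglotz operator --- is the correct route and works on $\text{Range}(H)$.

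The one genuine gap is the one you flag yourself, and it should not be waved away: the argument establishes strict positivity only on $\text{Range}(H)$, not on its closure. If $\psi_n=Hg_n\to\psi$ in $L^2(D)\times L^2(\partial D)$, interior elliptic regularity gives that the limit $f$ solves the Helmholtz equation inside $D$, but $L^2(D)$ convergence gives no control at $\partial D$, so the structural relation $h=f|_{\partial D}$ that your interior-transmission reduction relies on need not survive the limit. Closing this requires either an explicit characterization of $\overline{\text{Range}(H)}$ in this product space, or a rearrangement of the argument that avoids the trace relation (for instance exploiting that $w^\infty=H^*T\psi$, so $\Im(T\psi,\psi)=0$ forces $T\psi\perp\overline{\text{Range}(H)}$, and combining this with the coercive-plus-compact structure of $\Re(T)$). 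As written, your proof proves the lemma with $\overline{\text{Range}(H)}$ replaced by $\text{Range}(H)$; the strengthening to the closure is precisely the technical content delegated to \cite{rafa-fm}, and a complete writeup must supply it.
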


From the results in Lemma \ref{t-thm}, we can conclude that the augmented  far-field operator has the factorization 
$$F_{\sharp} = H^* {T}_\sharp H $$
where the middle operator ${T}_\sharp$ is strictly coercive on  $L^2(D)\times L^2(\partial D)$. This fact is given by the proof of Theorem 2.15 in \cite{kirschbook}. Notice, that in Lemma \ref{t-thm} we must assume that the wave number $k$ is not an associated transmission eigenvalue. These eigenvalues can be seen as wave numbers $k$ for which there exists a non-scattering incident wave. The associated transmission eigenvalue problem for \eqref{direct1}--\eqref{src} has been studied in multiple papers. In \cite{te-cbc} the existence of infinity many eigenvalues was proven for real-valued coefficients and in \cite{te-cbc3} it was proven that the set of transmission eigenvalues is discrete provided that $|n-1|^{-1} \in L^\infty (D)$ and $\eta^{-1} \in L^\infty (\partial D)$, which is the case under our assumptions.

Now, the last piece that we need to prove is for some $\ell_z \in L^2(\mathbb{S}^{d-1})$ depending on a sampling point $z \in \R^d$ we have that $\ell_z \in$ Range$(H^*)$ if and only if $z$ is in the scatterer $D$.  To this end, we let
$$ \ell_z = \text{e}^{- \text{i} k z \cdot \hat{y}}$$
which is the far-field pattern of the fundamental solution $\Phi(z,y)$ defined in \eqref{fund-solu}. With this, we are now ready to connect the Range$(H^*)$ to the scatterer $D$. 

\begin{theorem}\label{h-thm}
Let $ H^*:L^2(D)\times L^2(\partial D)  \longrightarrow  L^2(\mathbb{S}^{d-1})$ is given by 
$$H^*(\varphi  \, , \, \psi)  = \int_D\textnormal{e}^{-\textnormal{i} k x \cdot \hat{y}}  \varphi(x)\, \text{d}x +  \int_{\partial D}  \textnormal{e}^{-\textnormal{i} k x \cdot \hat{y}} \psi(x)\, \text{d}s(x).$$
Then we have that 
$$z \in D \iff  \ell_z \in \textnormal{Range}(H^*).$$
\end{theorem}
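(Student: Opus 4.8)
The plan is to interpret $H^*(\varphi,\psi)$ as the far-field pattern of a combined Helmholtz potential and then characterize when that far-field can coincide with the one produced by a point source. First I would observe that
$$w(x) := \int_D \Phi(x,\omega)\varphi(\omega)\,\text{d}\omega + \int_{\partial D}\Phi(x,\omega)\psi(\omega)\,\text{d}s(\omega)$$
is a radiating solution of the Helmholtz equation in $\R^d\setminus\partial D$ whose far-field pattern is exactly $H^*(\varphi,\psi)$, since the far-field pattern of $\Phi(\cdot,\omega)$ in direction $\hat y$ is $\textnormal{e}^{-\textnormal{i}k\omega\cdot\hat y}$. The volume potential contributes the interior source $\Delta w + k^2 w = -\varphi$ in $D$ and is regular across $\partial D$, while the single-layer potential is continuous across $\partial D$ and produces the normal-derivative jump $[\![\partial_\nu w]\!] = -\psi$. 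In parallel, $\ell_z$ is precisely the (normalized) far-field pattern of the point source $\Phi(z,\cdot)$, which is a radiating Helmholtz solution in $\R^d\setminus\{z\}$. Thus the asserted equivalence reduces to: the far-fields of $w$ and of $\Phi(z,\cdot)$ agree for some $(\varphi,\psi)\in L^2(D)\times L^2(\partial D)$ if and only if $z\in D$.

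For the necessity direction ($\ell_z\in\textnormal{Range}(H^*)\Rightarrow z\in D$), I would take $(\varphi,\psi)$ with $H^*(\varphi,\psi)=\ell_z$, so that $w$ and $\Phi(z,\cdot)$ share a far-field pattern. Rellich's lemma together with unique continuation then forces $w=\Phi(z,\cdot)$ throughout the unbounded connected component of $\R^d\setminus\overline D$, which is all of $\R^d\setminus\overline D$ since $D$ is simply connected with $C^2$ boundary. If $z\notin\overline D$, then $z$ lies in this component, where $w$ solves the homogeneous Helmholtz equation and is therefore real-analytic near $z$; this contradicts the singularity of $\Phi(z,\cdot)$ at $z$. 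The remaining case $z\in\partial D$ I would rule out by a trace-regularity argument: $w\in H^1_{loc}(\R^d)$ has exterior trace on $\partial D$ lying in $H^{1/2}(\partial D)$, whereas $\Phi(z,\cdot)|_{\partial D}$ is too singular at $z\in\partial D$ to belong to $H^{1/2}(\partial D)$, yielding the contradiction. Hence $z\in D$.

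For the sufficiency direction ($z\in D\Rightarrow\ell_z\in\textnormal{Range}(H^*)$), I would construct the densities explicitly. Fix any $w_{\textnormal{int}}\in H^2(D)$ whose Dirichlet trace on $\partial D$ equals $\Phi(z,\cdot)|_{\partial D}$ (which is smooth because $z$ is interior), and set
$$\varphi := -\big(\Delta w_{\textnormal{int}}+k^2 w_{\textnormal{int}}\big)\big|_D \in L^2(D), \qquad \psi := \partial_\nu w_{\textnormal{int}}^-\big|_{\partial D} - \partial_\nu\Phi(z,\cdot)\big|_{\partial D}\in L^2(\partial D).$$
The function $W$ defined by $W=w_{\textnormal{int}}$ in $D$ and $W=\Phi(z,\cdot)$ in $\R^d\setminus\overline D$ is then continuous across $\partial D$, carries the jump $[\![\partial_\nu W]\!]=-\psi$, solves the Helmholtz equation with interior source $-\varphi$ in $D$, and radiates. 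Comparing $W$ with the combined potential $w$ built from the same $(\varphi,\psi)$, their difference solves the homogeneous Helmholtz equation in all of $\R^d$ with no jumps across $\partial D$ and satisfies the radiation condition, hence vanishes identically. Therefore $w=\Phi(z,\cdot)$ in the exterior, so its far-field pattern is $\ell_z$, giving $H^*(\varphi,\psi)=\ell_z$ and $\ell_z\in\textnormal{Range}(H^*)$.

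The main obstacle I anticipate is the boundary case $z\in\partial D$ in the necessity direction: unlike the clean singularity contradiction available for $z\notin\overline D$, excluding $z\in\partial D$ requires quantifying the mismatch between the $H^{1/2}(\partial D)$ regularity of the exterior trace of the $L^2$-density potential $w$ and the boundary singularity of $\Phi(z,\cdot)$, and care is needed precisely because that singularity sits on $\partial D$, where the potential itself is only marginally regular.
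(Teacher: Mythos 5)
Your proposal is correct and follows essentially the same route as the paper: identify $H^*(\varphi,\psi)$ with the far-field pattern of the combined volume and single-layer potential, construct the densities for $z\in D$ via an $H^2$ lifting of the boundary trace of $\Phi(z,\cdot)$, and for $z\notin\overline{D}$ derive a contradiction from Rellich's lemma and the interior singularity of $\Phi(z,\cdot)$. The only substantive difference is that you additionally rule out $z\in\partial D$ by comparing the $H^{1/2}(\partial D)$ regularity of the exterior trace of the potential with the boundary singularity of $\Phi(z,\cdot)$, a case the paper's proof leaves implicit.
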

\begin{proof}
To prove the claim, we first notice that $H^*(\varphi  \, , \, \psi)=v^\infty$ where the function $v$ is given by 
$$v= \int_D \Phi(\cdot \, ,x)  \varphi(x)\, \text{d}x +  \int_{\partial D} \Phi(\cdot \, ,x) \psi(x)\, \text{d}s(x).$$
Therefore, we have that $v$ is the sum of the volume and single layer potential for Helmholtz equation. 
By the mapping properties of the volume and single layer potential we have that $v  \in H^1_{loc}(\R^d)$. Now, by the jump relation for the  normal derivative of the single layer potential across $\partial D$ (see for e.g. \cite{intequbook}) we have that $v$ is the unique solution to 
\begin{align}\label{v-equ}
\begin{array}{lc}
\Delta v+k^2 v= 0 \quad \text{in} \quad \mathbb{R}^d \backslash \overline{D}  \quad \text{and} \quad \Delta v+k^2 v= - \varphi  \quad \text{in} \quad  D\\ [1.5ex]
[\![ v ]\!] =0  \quad \text{ and } \quad  [\![\partial_\nu v]\!]  = - \psi   \quad \text{on} \quad \partial D
\end{array}
\end{align} 
along with the radiation condition \eqref{src}.

Now, to prove the claim we assume that $z \in D$. Then, we have that $\Phi(z, \cdot )$ is a smooth solution to Helmholtz equation in $\mathbb{R}^d \backslash \{z\}$. This implies that $\Phi(z \, ,\cdot)\big|^{+}_{\partial D} \in H^{3/2}(\partial D)$. By appealing to the Lifting Theorem we have that there is a 
$w_z \in H^2(D)$ where 
$$w^{-}_z= \Phi^{+}(z \, ,\cdot)  \quad \text{on} \quad \partial D.$$
Therefore, we let 
\begin{align*}
v_z = \left\{
\begin{array}{lc}
 w_z  \quad \text{in} \quad  D\\ [1.5ex]
\Phi(z \, , \cdot )  \quad \text{in} \quad \mathbb{R}^d \backslash \overline{D}
\end{array}
\right.
\end{align*} 
and notice that $v_z \in H^1_{loc}(\R^d)$ is a radiating solution to \eqref{v-equ} with 
$$\varphi_z = -  (\Delta w_z+k^2 w_z) \in L^2(D) \quad \text{and} \quad \psi_z =  \big(\partial_{\nu}w^{-}_z - \partial_{\nu} \Phi^{+}(z \, ,\cdot) \big) \in L^2(\partial D).$$ 
Since, $\Phi(z, \cdot ) = v_z$ in $ \mathbb{R}^d \backslash \overline{D}$ we have that $\ell_z = v^{\infty}_z$ which implies that  $\ell_z = H^*(\varphi_z , \psi_z)$.

To proceed by way of contradiction, assume that $z \in \mathbb{R}^d \backslash \overline{D}$ with 
$$\ell_z = H^*(\varphi_z , \psi_z)\quad \text{for some} \quad(\varphi_z , \psi_z) \in L^2(D)\times L^2(\partial D).$$ 
This implies that there exists a radiating solution to \eqref{v-equ} denoted  $v_z \in H^1_{loc}(\R^d)$ such that $\Phi(z\, , \cdot ) = v_z$ in $ \mathbb{R}^d \backslash \big( \overline{D} \cup \{z\} \big)$ by Rellich’s Lemma. By appealing to elliptic regularity (see for e.g. \cite{evans}) we have that $v_z$ is continuous in any ball $B(z ,\epsilon) \subset \mathbb{R}^d \backslash \overline{D}$. Therefore, $v_z$ is bounded near $z$ but $\Phi(z \, , \cdot )$ has a singularity at $z$ which gives a contradiction, proving the claim. 
\end{proof}

With Theorem \ref{h-thm} we have all we need to prove that the regularized factorization method is applicable to our problem. Indeed, by combining the analysis in these section we have the following theorem. This connects the scatterer $D$ to the measured far-field operator that is perturbed by random noise. 

\begin{theorem}\label{recon-thm}
Assume that $F^\delta :  L^2(\mathbb{S}^{d-1}) \longrightarrow  L^2(\mathbb{S}^{d-1})$ is a perturbation of the far-field operator $F$ given by \eqref{ff-operator}  with $F^{\delta}_{\sharp}$ being a positive compact operator.  Provided that $k$ is not a transmission eigenvalue with $\Re(n-1)$ and $\Re(\eta)$ both uniformly positive (or negative) definite, then 
$$z \in D \iff  \liminf\limits_{\alpha \to 0^+} \liminf\limits_{\delta \to 0^+} \big( g_z^{\delta,\alpha} \, , F^{\delta}_{\sharp} g_z^{\delta,\alpha} \big)_{L^2(\mathbb{S}^{d-1})} < \infty$$
where $g_z^{\delta,\alpha}$ is the regularized solution to $F^{\delta}_{\sharp}  g = \ell_z$.
\end{theorem}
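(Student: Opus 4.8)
The plan is to read Theorem \ref{recon-thm} as a direct specialization of the abstract result Theorem \ref{regfm-thm3} to the concrete operators $F_\sharp$ and $F^\delta_\sharp$, with the role of $S^*$ played by $H^*$, and then to convert the abstract range condition $\ell_z \in \textnormal{Range}(H^*)$ into the geometric statement $z \in D$ via Theorem \ref{h-thm}. Since the far-field operator maps $L^2(\mathbb{S}^{d-1})$ into itself, I would work throughout in the rephrased, Hilbert-space-to-itself version of Theorem \ref{regfm-thm3} mentioned in Section \ref{new-RegFM}, so that the dual pairing is everywhere replaced by the $L^2(\mathbb{S}^{d-1})$ inner product.

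First I would assemble the factorization hypotheses. From Lemma \ref{t-thm} together with the argument behind Theorem 2.15 of \cite{kirschbook}, the augmented operator admits the factorization $F_\sharp = H^* T_\sharp H$ with $T_\sharp$ bounded and strictly coercive on $L^2(D) \times L^2(\partial D)$. The compactness of $H$ follows from its analytic kernels and its injectivity from unique continuation combined with injectivity of the Herglotz operator, both established in the discussion preceding Lemma \ref{t-thm}. Coercivity of $T_\sharp$ and injectivity of $H$ then give $(g, F_\sharp g)_{L^2(\mathbb{S}^{d-1})} = (Hg, T_\sharp Hg) \geq c\| Hg\|^2 > 0$ for $g \neq 0$, so that $F_\sharp$ is a positive compact operator, matching the hypotheses on the operator $A$ in Theorem \ref{regfm-thm3}.

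Next I would verify that $F^\delta_\sharp$ is an admissible perturbation of $F_\sharp$. Positivity and compactness of $F^\delta_\sharp$ are assumed in the statement, so the only point that needs checking is the perturbation bound. The maps $F \mapsto \Re(F)$ and $F \mapsto \Im(F)$ are norm-decreasing, so $\| \Re(F) - \Re(F^\delta)\| \leq \delta$ and $\| \Im(F) - \Im(F^\delta)\| \leq \delta$. Here I expect the main obstacle: the absolute-value operation $B \mapsto |B|$ on self-adjoint operators is continuous but not Lipschitz in the operator norm, so one cannot bound $\big\| |\Re(F)| - |\Re(F^\delta)| \big\|$ by a fixed multiple of $\delta$. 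What rescues the argument is that mere continuity suffices, since Theorem \ref{regfm-thm3} only invokes the double limit $\liminf_{\alpha \to 0^+}\liminf_{\delta \to 0^+}$: writing $\tilde{\delta} := \| F_\sharp - F^\delta_\sharp\|$, we still have $\tilde{\delta} \to 0$ as $\delta \to 0^+$, so I would apply Theorem \ref{regfm-thm3} to the pair $(F_\sharp, F^\delta_\sharp)$ with effective noise level $\tilde{\delta}$ (equivalently, one simply takes $\| F_\sharp - F^\delta_\sharp\| \leq \delta$ as the working hypothesis on the augmented data). The inner $\liminf$ over the true noise level is then controlled by the corresponding limit over $\tilde{\delta}$, the regularized solution $g_z^{\delta,\alpha}$ of $F^\delta_\sharp g = \ell_z$ being the same object built from the singular value decomposition of $F^\delta_\sharp$.

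Finally I would chain the equivalences. Applying Theorem \ref{regfm-thm3} with $A = F_\sharp$, $A^\delta = F^\delta_\sharp$, $S^* = H^*$, and $\ell = \ell_z$ yields
$$\ell_z \in \textnormal{Range}(H^*) \iff \liminf_{\alpha \to 0^+}\liminf_{\delta \to 0^+} \big( g_z^{\delta,\alpha}, F^\delta_\sharp g_z^{\delta,\alpha}\big)_{L^2(\mathbb{S}^{d-1})} < \infty.$$
Theorem \ref{h-thm} provides $z \in D \iff \ell_z \in \textnormal{Range}(H^*)$, and combining the two equivalences proves the claim.
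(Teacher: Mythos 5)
Your proposal matches the paper's (implicit) argument exactly: Theorem \ref{recon-thm} is obtained by applying Theorem \ref{regfm-thm3} to $A=F_\sharp=H^*T_\sharp H$ and $A^\delta=F^\delta_\sharp$, using Lemma \ref{t-thm} (via Theorem 2.15 of \cite{kirschbook}) for the strict coercivity of $T_\sharp$, the preceding discussion for the compactness and injectivity of $H$, and Theorem \ref{h-thm} to translate $\ell_z\in\textnormal{Range}(H^*)$ into $z\in D$. Your additional observation --- that $\|F_\sharp-F^\delta_\sharp\|$ is not controlled by a fixed multiple of $\delta$ because the operator absolute value is not Lipschitz in the operator norm, but that continuity suffices since only the limit $\delta\to0^+$ enters the statement --- addresses a point the paper passes over in silence, and is a correct and worthwhile refinement.
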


In general, one can use the  $\alpha = \alpha(\delta)$ from the method described  in the following section for a known noise level $0<\delta\ll 1$. This will be explored in the following section numerically. Also, one may be able to weaken the assumption on the coefficient $n(x)$ as is done in \cite{GLSM2}.

\section{Numerical Examples}\label{numerics}

Here, we will provide a few numerical examples to illustrate the theoretical results that we have proven in the previous sections. In this section, we will provided numerical reconstructions using  \texttt{MATLAB} R2022a. To this end, we will consider recovering a scatterer $D$ from the far-field pattern $u^\infty$ corresponding to \eqref{direct1}--\eqref{src}. In all our examples, for simplicity we will assume that parameters $n$ and $\eta$ are constants given by 
$$ n=4+2\text{i} \quad \text{and} \quad \eta = 2+\text{i}.$$
Therefore, we have that  \eqref{direct1}--\eqref{src} is well-posed and that Theorem \ref{recon-thm} can be used to recover that scatterer from the measured far-filed operator.

In order to proceed, we need to synthetically compute the far-field pattern. To this end, recall that the scattered field $u^s(x,\hat{y})$ is given by \eqref{intrep} and in \cite{rafa-fm} we see that for scatterers such that $|D| \ll 1$
$$u^{\infty}(\hat{x},\hat{y}) \approx k^2 (n-1) \int_D  \mathrm{e}^{- \mathrm{i} k {\omega}  \cdot ( \hat{x} - \hat{y})} \, \text{d}\omega + \eta \int_{\partial D}  \mathrm{e}^{- \mathrm{i} k {\omega}  \cdot ( \hat{x} - \hat{y})}\, \text{d}s(\omega).$$ 
Note, that we have used the fact that in our examples both $n$ and $\eta$ are constants. This corresponds to version of the Born approximation of the far-field pattern for the scattering problem with a conductive boundary. This implies that the far-field pattern can be computed using a 32 point Gaussian quadrature method in \texttt{MATLAB}. Here, we will assume that the boundary of the scatterer $D$ is given by  
$$\partial D = r(\theta) \left(\cos (\theta), \sin(\theta) \right) \quad \text{ for } \quad 0\leq \theta \leq 2 \pi. $$
In our examples, the $2 \pi$ periodic radial function $r(\theta)$ is given by 
$$ r(\theta) = 0.5\big(1-0.3\sin(4\theta) \big)$$
for a star shaped scatterer. Note, that this is a non-convex scatterer (see Figure \ref{recon1}) and we will see that the regularized factorization method can provide accurate reconstruction even in the presence of noisy data.

In order to discretize that problem, we will compute $u^{\infty}(\hat{x}_i, \hat{y}_j )$ using numerical integration at 64 equally spaced points on the unit circle given by 
$$\hat{x}_i =\hat{ y}_i =(\cos \theta_i , \sin \theta_i) \quad  \text{with}\quad  \theta_i =2\pi(i-1)/64.$$
This gives that discretized far-field operator given by 
$${\bf F} = \left[u^{\infty}(\hat{x}_i, \hat{y}_j ) \right]_{i,j=1}^{64}$$
will be used to recover the scatterer. Therefore, just as in \cite{harris1} we have that the imaging functional that discretizes version of Theorem \ref{recon-thm} is to plot the imaging functional 
\begin{align}\label{imag-func}
W(z)= \left[ \sum\limits_{j=1}^{64} \frac{\phi^2(\sigma_j  ; \alpha)}{\sigma_j} \big|({\bf u}_j , \boldsymbol{ \ell}_z)\big|^2 \right]^{-1} \quad \text{ with } \quad  \boldsymbol{\ell}_z = [\text{e}^{-\text{i}k \hat{x}_i \cdot z}]_{i=1}^{64}.
\end{align}
Here $\sigma_j$ are the singular values and ${\bf u}_j$ are the left singular vectors of 
$${\bf F}_{\sharp} = \big|  \Re({\bf F}) \big| +  \big| \Im ({\bf F}) \big|$$ 
and the filter function $\phi(t ; \alpha)$ is given by \eqref{filters} or \eqref{GLSMfilter}. Note, that the absolute value of a self-adjoint matrix is given by it's eigenvalue decomposition. \\

{\bf Example 1:} In our first reconstruction in Figure \ref{recon1}, we assume that we have the  discretized far-field operator with no noise added to the data. Then, we can plot the imaging functional $W(z)$ using the Landweber filter function given in \eqref{filters} with parameters $\alpha = 10^{-5}$ and $\beta = 1/(2\| {\bf F_\sharp} \|_2^2)$. From this we see that the reconstruction with and without regularization gives good reconstructions of the scatterer. 
\begin{figure}[ht]
\centering 
\includegraphics[scale=0.28]{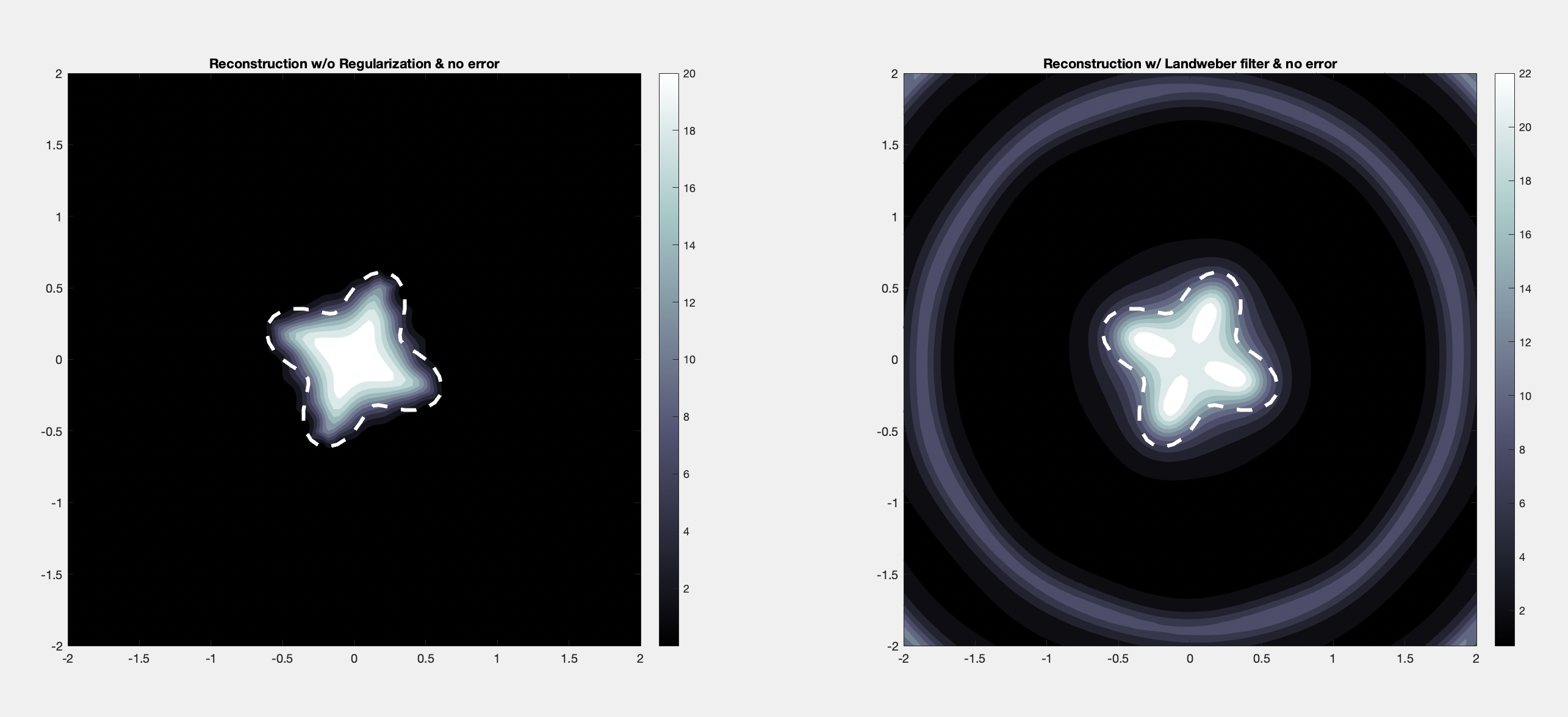}
\caption{Reconstruction of the scatterer with and without regularization where no error is added to the far-field data. Here we us the Landweber filter given in \eqref{filters}. Left: reconstruction  without regularization and Right: reconstruction with regularization.}
\label{recon1}
\end{figure}\\

Now, we wish to show that when there is added noise in the data that regularization is required for reconstructing the scatterer. To this end, we need to define the discretized far-field operator with random noise added which is given by 
$${\bf F}^{\delta} = \left[u^{\infty}(\hat{x}_i, \hat{y}_j ) \left( 1 +\delta E_{i,j} \right) \right]_{i,j=1}^{64}$$
with random complex-valued matrix $\mathbf{E}$ satisfying $\| \mathbf{E} \|_2 =1$. Again, the far-field pattern $u^{\infty}(\hat{x}_i, \hat{y}_j )$ is again computed via the numerical integration as in Figure \ref{recon1}. Here, the real and imaginary parts of the matrix $\mathbf{E}$ are randomly distributed between $\pm 1$ and then normalized. In this case, we let 
$${\bf F}^\delta_{\sharp} = \big|  \Re({\bf F}^{\delta}) \big| +  \big| \Im ({\bf F}^{\delta}) \big|$$ 
and in \eqref{imag-func} we use the singular values and vectors corresponding to the operator ${\bf F}^\delta_{\sharp}$. 
In the following examples, we see how noise added to the far-field data affects the reconstruction with and without regularization.  \\

{\bf Example 2:} In the reconstructions given in Figure \ref{recon2}, we present the case with error added to the data. Just as in the previous example, we use Landweber filter function for our regularization scheme where again we take $\beta = 1/(2\| {\bf F_\sharp} \|_2^2)$. We see that the added noise in the data corrupts the reconstruction without regularization. Here, we let the noise level $\delta =0.05$ and take  $\alpha = 10^{-5}$ ah-hoc as in the previous case.  \\
\begin{figure}[ht]
\centering 
\includegraphics[scale=0.28]{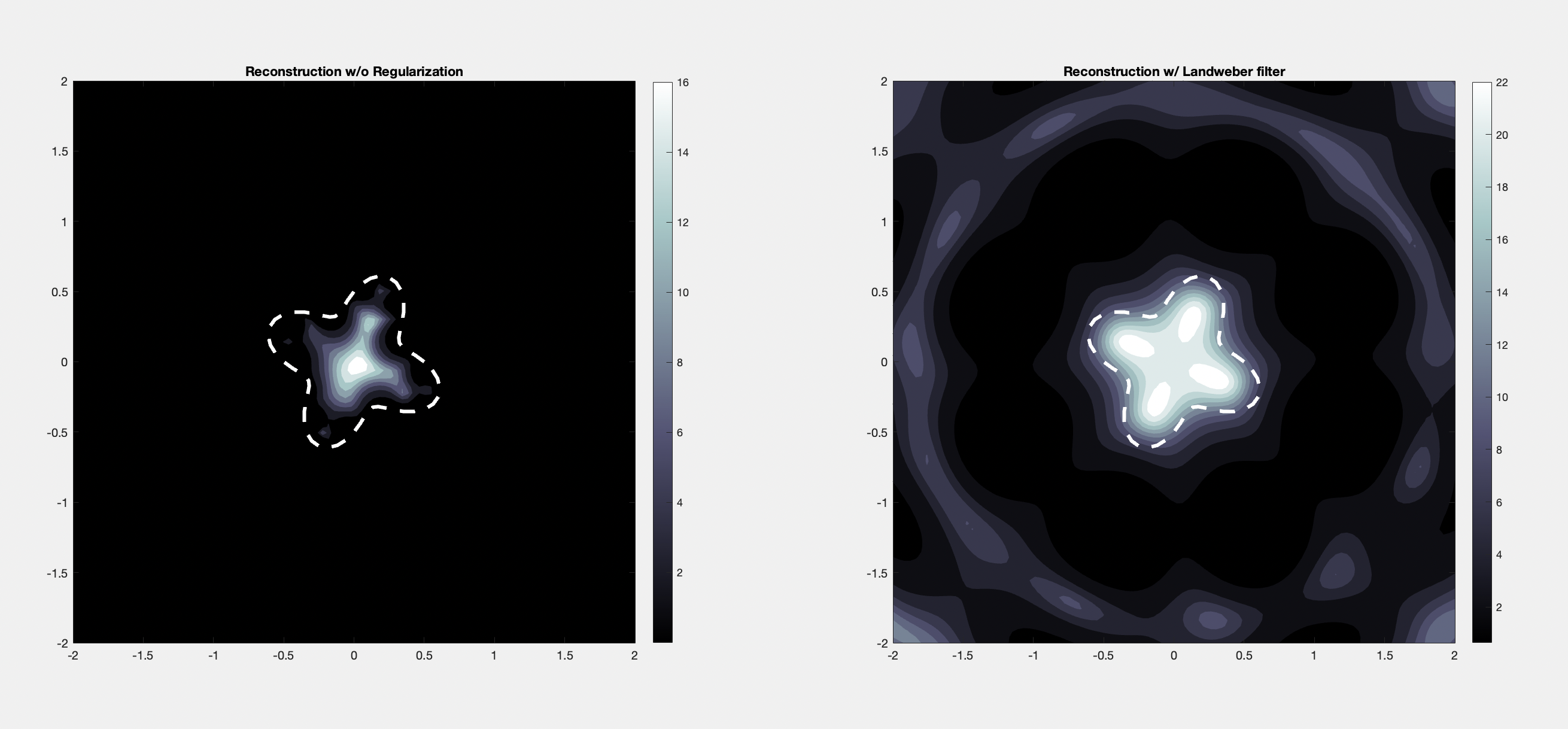}
\caption{Reconstruction of the scatterer with and without regularization where $5\%$ error is added to the far-field data. Here we us the Landweber filter given in \eqref{filters}. Left: reconstruction  without regularization and Right: reconstruction with regularization.}
\label{recon2}
\end{figure}\\

{\bf Example 3:} In the reconstructions given in Figure \ref{recon3}, we give the reconstruction using the Tiknohov filter given in \eqref{filters}. Again, we wish to show that the regularization stabilizes the reconstruction. To this end, we again provide a numerical reconstruction of the scatterer $D$ with and without regularization. In this example, we again let the given noise level $\delta =0.05$ and take  $\alpha = 10^{-5}$ ah-hoc. \\
\begin{figure}[H]
\centering 
\includegraphics[scale=0.28]{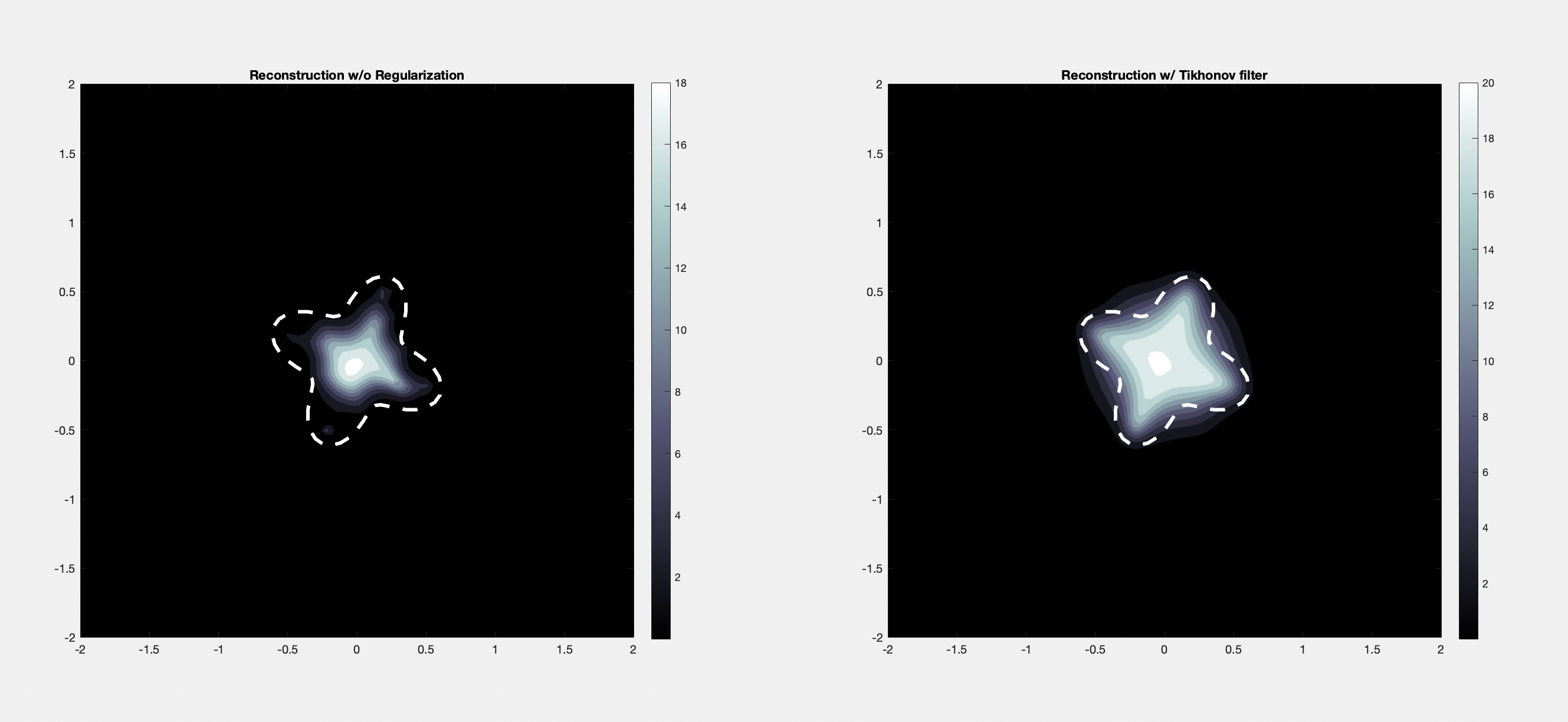}
\caption{Reconstruction of the scatterer with and without regularization where $5\%$ error is added to the far-field data. Here we us the Tiknohov filter given in \eqref{filters}. Left: reconstruction  without regularization and Right: reconstruction with regularization.}
\label{recon3}
\end{figure}

{\bf Example 4:} In the reconstructions given in Figure \ref{recon4}, we yet again show that the regularization helps provide stability with noisy data. For this example, we use the filter function associated with the GLSM given by \eqref{GLSMfilter}. Again we can see that the regularization provides needed stability with respect to noisy data.  Here we take the noise level $\delta =0.05$ and the regularization parameter  $\alpha = 10^{-5}$.\\
\begin{figure}[ht]
\centering 
\includegraphics[scale=0.28]{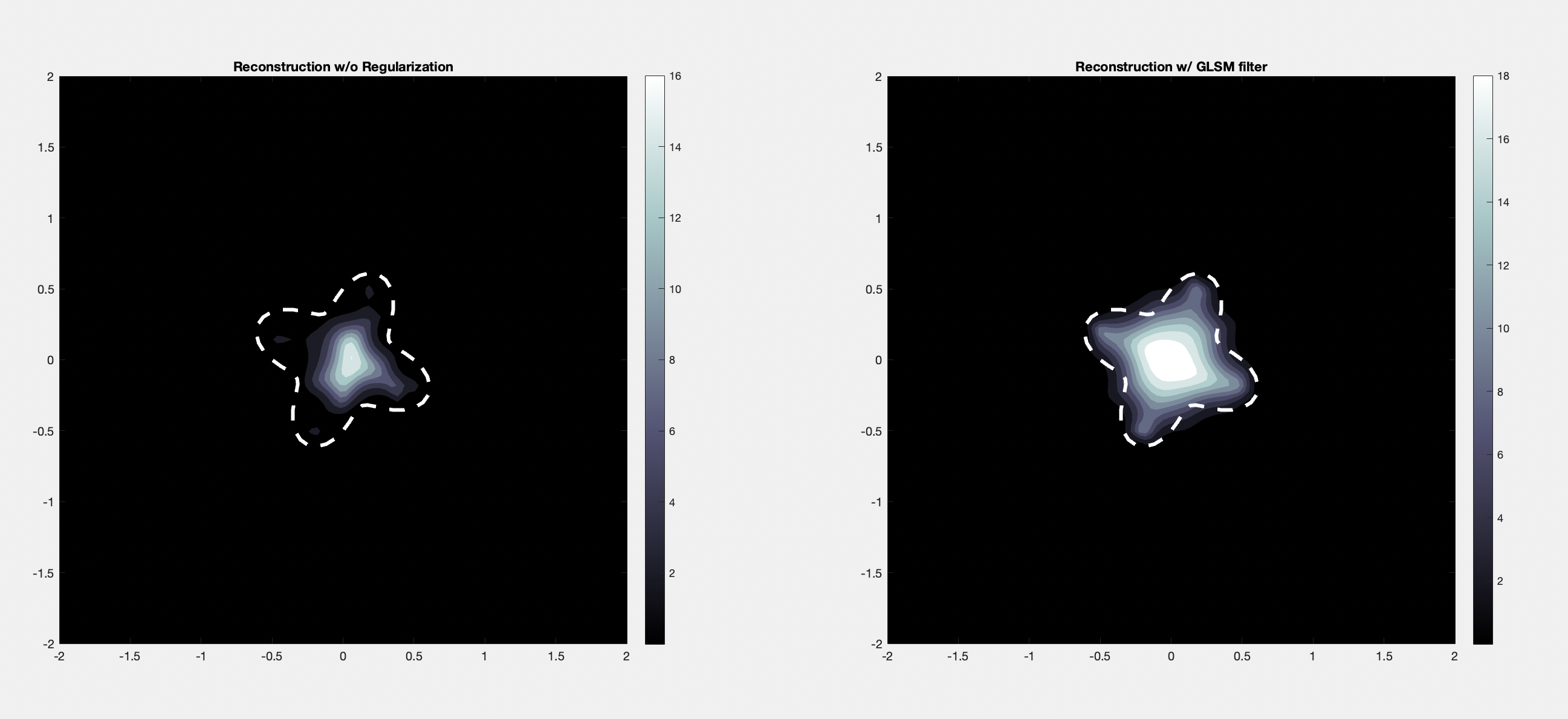}
\caption{Reconstruction of the scatterer with and without regularization where $5\%$ error is added to the far-field data. Here we us the GLSM filter given in \eqref{GLSMfilter}. Left: reconstruction  without regularization and Right: reconstruction with regularization.}
\label{recon4}
\end{figure}\\

Now, we are interested in determining the regularization parameter $\alpha = \alpha(\delta)$ via analytical means motivated by the proof of Theorem \ref{regfm-thm3}. To this end, we notice that for $\alpha(\delta)$ in the inequality \eqref{reg-ineq} we would require 
$$ C^2_{\alpha(\delta)} \sqrt[4]{\delta}  \longrightarrow 0^+ \quad \text{ and } \quad \alpha(\delta) \longrightarrow 0^+  \quad \text{ as } \quad \delta \longrightarrow 0^+.$$
Here $C_{\alpha}$ depends on which regularization filter function is used.  Therefore, in order to determine a suitable $\alpha(\delta)$ we will solve $C^2_{\alpha(\delta)} \sqrt[4]{\delta}  = \delta^p$ for some $p>0$. From this we obtain the regularization parameters
\begin{align} \label{reg-param}
\alpha_{\text{Tik}}(\delta) =  \frac{1}{4} \delta^{\left(\frac{1}{4}-p\right)}, \quad  \alpha_{\text{Land}}(\delta) =  \frac{1}{2 \| {\bf F}^\delta_{\sharp}  \|_2^2}  \delta^{\left(\frac{1}{4}-p\right)}   \quad \text{ and } \quad\alpha_{\text{GLSM}}(\delta) =  \delta^{\frac{1}{2}\left(\frac{1}{4}-p\right)}
\end{align}
for Tikhonov regularization, Landweber iteration and the GLSM, respectively. Note, that for the Landweber iteration we have taken $\beta = 1/(2\| {\bf F^\delta_\sharp} \|_2^2)$ as in the previous examples. From the fact that we require  $\alpha(\delta) \longrightarrow 0^+$ as  $\delta \longrightarrow 0^+$, this implies that $p \in (0 , 1/4)$. Also, we take the $m =\lceil 1/ \alpha_{\text{Land}}(\delta)\rceil$ to be the parameter in the Landweber iteration.\\

{\bf Example 5:} In the reconstructions given in Figure \ref{recon5}, we test the regularization parameter $\alpha (\delta)$ given by \eqref{reg-param}. We present the numerical reconstruction of the scatterer where we pick $p=1/8$ for each of filter function. This gives that 
$$\alpha_{\text{Tik}}(\delta) =  \frac{1}{4} \delta^{1/8}, \quad  \alpha_{\text{Land}}(\delta) =  \frac{1}{2 \| {\bf F}^\delta_{\sharp}  \|^2}  \delta^{1/8}   \quad \text{ and } \quad\alpha_{\text{GLSM}}(\delta) =  \delta^{1/16}$$
as the given regularization parameter. Here we let the given noise level $\delta =0.01$ and present the reconstructions by each regularizing filter with it's associated regularization parameter. For Figure \ref{recon5}, we compute the regularization parameters  
$$\alpha_{\text{Tik}}(0.01) = 0.1406, \quad  \alpha_{\text{Land}}(0.01) = 3.0784\times10^{-8}  \quad \text{ and } \quad\alpha_{\text{GLSM}}(0.01) =  0.7499$$
which are used in the reconstruction. 

\begin{figure}[H]
\centering 
\includegraphics[scale=0.33]{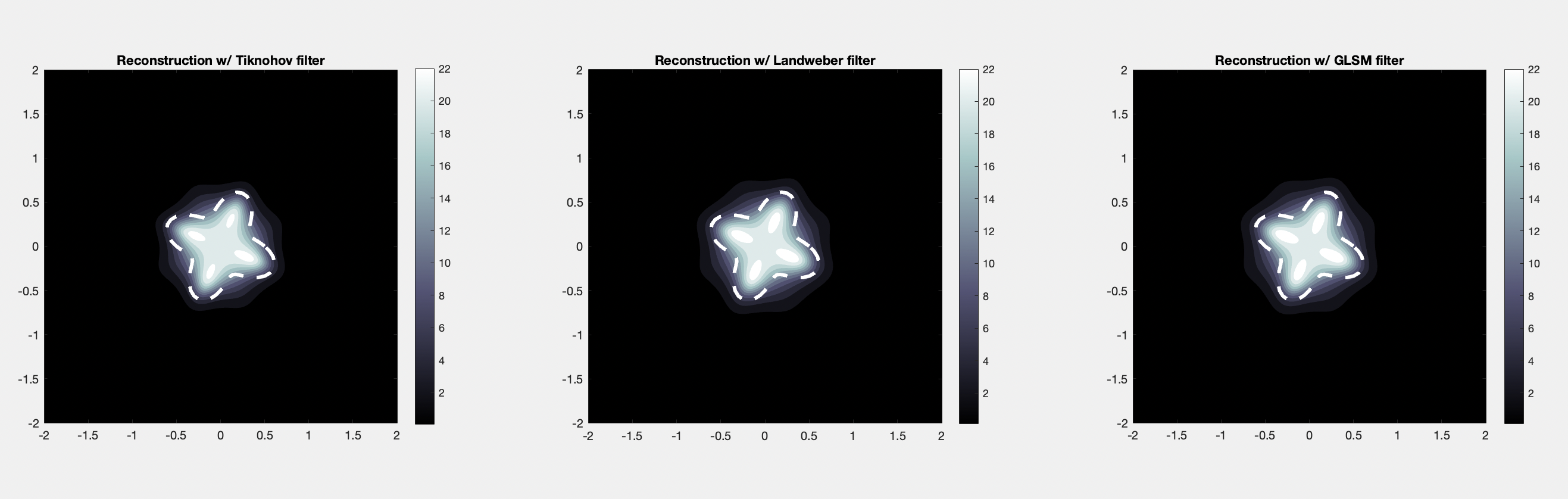}
\caption{The reconstruction via Tiknohov, Landweber and GLSM filters given in \eqref{filters} and \eqref{GLSMfilter}. Here the regularization parameter $\alpha(\delta)$ is given  by \eqref{reg-param} for $p=1/8$.}
\label{recon5}
\end{figure}

\section{Conclusion}
In conclusion, we have given an analytical and numerical study of the regularized factorization method with noisy data. In this paper, we have proven that the regularization strategy discussed here  is stable with respect to noise as well as computationally simple to implement. Indeed, in order to implement this method we see that one only needs the singular value decomposition  of the data operator to provided stable reconstructions. We have also given an analytical method for determining a suitable regularization parameter. For an application of this method, we have applied the regularized factorization method to an inverse scattering problem for recovering a scatterer from the far-field data. As in \cite{GH1,harris1} we know that this method can be applied to other imaging modalities such as electrical and diffuse optical tomography. \\

\noindent{\bf Acknowledgments:} The research of I. Harris is partially supported by the NSF DMS Grant 2107891.


\end{document}